\documentclass{amsart}

\usepackage{cgeometry}

\bibliography{ComplexGeometry}
\setcounter{tocdepth}{1}
\DeclareMathOperator{\Supp}{Supp}

\begin{document}

\title[Integration by parts]{Integration by parts formula for non-pluripolar product}
\author{Mingchen Xia}
\date{\today}

\maketitle

\begin{abstract}
    In this paper, we prove the integration by parts formula for the non-pluripolar product on a compact K\"ahler manifold. Our result generalizes the special case of potentials with small unbounded loci proved in \cite{BEGZ10}.
\end{abstract}

\tableofcontents

\section{Introduction}
Let $X$ be a compact K\"ahler manifold of dimension $n$. Let $\alpha$ be a big cohomology class with a smooth representative $\theta$. Let $\PSH(X,\theta)$ be the space of $\theta$-psh functions on $X$. The mixed Monge--Amp\`ere operator is defined in \cite{BT82} for bounded potentials and in \cite{BEGZ10} for general potentials. The product constructed in \cite{BEGZ10} is called the \emph{non-pluripolar product}.

It is nature and important to know if one can perform integration by parts for the non-pluripolar product.
For potentials with small unbounded loci in $\PSH(X,\theta)$, one can always reduce the problem to the classical Bedford--Taylor theory with certain tricks and the integration by parts formula is proved in \cite{BEGZ10} Theorem~1.14 for these potentials.

The general case is not yet proved in literature. We fill the gap in this paper. 
\begin{theorem}
Let $\gamma_i,\varphi_j,\psi_j\in \PSH(X,\theta)$ ($j=1,2$, $i=1,\ldots,n-1$). Let $u=\varphi_1-\varphi_2$, $v=\psi_1-\psi_2$. Assume that 
\[
[\varphi_1]=[\varphi_2],\quad [\psi_1]=[\psi_2].
\]
Then
\begin{equation}\label{eq:1}
\int_X u\,\ddc v \wedge \theta_{\gamma_1}\wedge \cdots\wedge \theta_{\gamma_{n-1}}= \int_X v\,\ddc u \wedge \theta_{\gamma_1}\wedge \cdots\wedge \theta_{\gamma_{n-1}}.
\end{equation}
\end{theorem}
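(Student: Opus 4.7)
My plan is to reduce the general case to the small unbounded loci setting of \cite{BEGZ10}, Theorem~1.14, via the canonical approximation. First observe that the hypothesis $[\varphi_1]=[\varphi_2]$ is equivalent to $\varphi_1-\varphi_2$ being globally bounded on $X$, so $u$ is a bounded Borel function, and likewise for $v$. Expanding $\ddc u=\theta_{\varphi_1}-\theta_{\varphi_2}$ and $\ddc v=\theta_{\psi_1}-\theta_{\psi_2}$, the identity \eqref{eq:1} reduces to proving
\[
\int_X u\,\theta_{\psi_1}\wedge T-\int_X u\,\theta_{\psi_2}\wedge T=\int_X v\,\theta_{\varphi_1}\wedge T-\int_X v\,\theta_{\varphi_2}\wedge T,
\]
where $T:=\theta_{\gamma_1}\wedge\cdots\wedge\theta_{\gamma_{n-1}}$ denotes the non-pluripolar product of the $n-1$ positive currents.

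Let $V_\theta$ be the envelope of $\theta$-psh functions bounded above by $0$; it has minimal singularities and is locally bounded on the ample locus of $\alpha$. For $k\in\mathbb{N}$ set
\[
\varphi_j^k:=\max(\varphi_j,V_\theta-k),\quad\psi_j^k:=\max(\psi_j,V_\theta-k),\quad\gamma_i^k:=\max(\gamma_i,V_\theta-k).
\]
Each of these decreases pointwise to the original potential as $k\to\infty$ and has minimal singularities, hence a small unbounded locus contained in the augmented base locus $\mathbb{B}_+(\alpha)$. Since $[\varphi_1^k]=[\varphi_2^k]=[V_\theta]$, and similarly for the $\psi_j^k$, the cited result of \cite{BEGZ10} yields \eqref{eq:1} at level $k$, that is with $u,v,\gamma_i$ replaced by $u^k:=\varphi_1^k-\varphi_2^k$, $v^k:=\psi_1^k-\psi_2^k$, and $\gamma_i^k$.

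It then remains to pass to the limit as $k\to\infty$. A short computation from $|\varphi_1-\varphi_2|\le C$ gives the uniform bound $|u^k|\le C$, and similarly for $v^k$; moreover $u^k=u$ and $v^k=v$ hold outside the pluripolar set $\{\varphi_1=-\infty\}\cup\{\varphi_2=-\infty\}\cup\{\psi_1=-\infty\}\cup\{\psi_2=-\infty\}$ once $k$ is large enough, depending on the point. By the known monotonicity theorem for the non-pluripolar product under canonical approximation, the currents $\theta_{\psi_j^k}\wedge\theta_{\gamma_1^k}\wedge\cdots\wedge\theta_{\gamma_{n-1}^k}$ converge weakly, with convergence of total mass, to $\theta_{\psi_j}\wedge\theta_{\gamma_1}\wedge\cdots\wedge\theta_{\gamma_{n-1}}$.

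The main obstacle is justifying $\int_X u^k\,d\mu^k\to\int_X u\,d\mu$, since weak convergence of non-pluripolar measures does not interact automatically with pointwise a.e.\ convergence of uniformly bounded integrands. The remedy is to exploit quasi-continuity with respect to the Monge--Amp\`ere capacity: each $u^k$ is a difference of bounded $\theta$-psh functions, hence quasi-continuous. Splitting
\[
\int_X u^k\,d\mu^k-\int_X u\,d\mu=\int_X(u^k-u)\,d\mu^k+\int_X u\,d(\mu^k-\mu),
\]
the first term is controlled by a capacity estimate since $\{u^k\ne u\}$ shrinks to a pluripolar set on which the $\mu^k$ place uniformly negligible mass (by the mass convergence above), while the second is handled by quasi-continuity of $u$ together with the weak convergence of $\mu^k$. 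Making this capacity-continuity step precise and uniform in $k$ is the technical heart of the argument.
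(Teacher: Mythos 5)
There is a genuine gap, and it sits exactly at the point you flag as "the technical heart": the limit passage $k\to\infty$ along the canonical approximants $\varphi_j^k=\max(\varphi_j,V_\theta-k)$ does not work, because the asserted convergence "$\theta_{\psi_j^k}\wedge\theta_{\gamma_1^k}\wedge\cdots\wedge\theta_{\gamma_{n-1}^k}$ converge weakly, with convergence of total mass, to $\theta_{\psi_j}\wedge\theta_{\gamma_1}\wedge\cdots\wedge\theta_{\gamma_{n-1}}$" is false in general. Each approximant has minimal singularities, so every product at level $k$ has total mass equal to the movable intersection number $\mathrm{vol}(\alpha)$ (independent of $k$), whereas the limiting non-pluripolar product can have strictly smaller mass --- this happens as soon as the potentials are singular enough (e.g.\ positive generic Lelong numbers along a divisor). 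In that situation no subsequence of the level-$k$ measures can converge weakly to the target measure, since weak limits cannot lose mass on a compact manifold; the excess mass concentrates on the singular set. The known convergence theorems for non-pluripolar products along decreasing sequences (e.g.\ \cite{DDNL18c}) all \emph{assume} convergence of the total masses as a hypothesis precisely because of this phenomenon. Consequently neither your capacity estimate for $\int(u^k-u)\,d\mu^k$ (which relies on "$\mu^k$ place uniformly negligible mass" on the shrinking exceptional sets) nor the term $\int u\,d(\mu^k-\mu)$ can be controlled, and the identity proved at level $k$ does not pass to the limit. One could hope that the excess masses cancel in the difference $\theta_{\psi_1^k}-\theta_{\psi_2^k}$, but that cancellation is exactly as hard as the theorem itself and is not established.

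This failure of continuity along the canonical (or any decreasing, mass-inflating) approximation is the reason the paper takes a completely different route: instead of approximating on $X$, it lifts each potential $\varphi$ to a potential $\Phi_N[\varphi]$ on $X\times\mathbb{P}^N$ via the Witt Nystr\"om construction. These lifts genuinely have small unbounded loci (no approximation needed), so \cite{BEGZ10} Theorem~1.14 applies upstairs exactly, and Theorem~\ref{thm:WN} gives an \emph{exact} fibre-integration formula whose $N\to\infty$ limit converges in total variation to the desired product on $X$ --- no mass is lost. If you want to salvage a reduction-by-approximation strategy, you would need approximants whose products converge to the limit product together with their masses, which for general $\theta$-psh functions is not provided by $\max(\varphi,V_\theta-k)$.
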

See Theorem~\ref{thm:intbypart1}. By a simple polarization, one can also get a slightly more general result, see Corollary~\ref{cor:ibpfinal}.

Our approach is to apply a construction due to Witt Nystr\"om. In \cite{WN17}, Witt Nystr\"om provided a construction that associates each potential $\varphi\in \PSH(X,\theta)$ with potentials $\Phi_N[\varphi]$ on $X\times \mathbb{P}^N$ with small unbounded loci for each $N\geq 1$. From the explicit construction, one can show that the Monge--Amp\`ere measures of $\Phi_N[\varphi]$ converge to the Monge--Amp\`ere measure of $\varphi$ in a proper sense. We study this construction and show that one can reduce the integration by parts formula to the special case using this construction.

As a corollary of our argument, we also get a formula that directly relates the integrals in \eqref{eq:1} to similar integrals on $X\times \mathbb{P}^N$. See Corollary~\ref{cor:intrestri}.

\subsection*{Acknowledgement}
We would like to thank Tamás Darvas and Antonio Trusiani for bringing this problem into our attention and for related discussions. We also thank Antonio Trusiani for pointing out the lack of one assumption in the statement of Theorem~\ref{thm:weak1} in the first arXiv version of this paper.
\section{Integration by parts}
\subsection{Notations}
Let $X$ be a compact K\"ahler manifold of dimension $n$. Let $\alpha$ be a big class with smooth representative $\theta$. Let $Z$ be the complement of the ample locus of $\alpha$ (\cite{Bou04}).
For each $N\geq 1$, define
\[
\Sigma_N:=\{\alpha\in \mathbb{R}^N_{\geq 0}: |\alpha|\leq 1\},
\]
where $|\alpha|$ is the sum of components of $\alpha$.

For each $N\geq 1$, we fix a basis $Z_0,\ldots,Z_N$ of $H^0(\mathbb{P}^N,\mathcal{O}(1))$. Let 
\[
H=H_N:=\{Z_0=0\}\subseteq \mathbb{P}^N.
\]

On $\mathbb{P}^N-H$, define
\[
z_a:=\frac{Z_a}{Z_0}\in \Gamma(\mathbb{P}^N-H,\mathcal{O}),\quad a=1,\ldots,N.
\]
We will identify $\mathbb{P}^N-H$ with $\mathbb{C}^N$ via $(z_1,\ldots,z_N)$.

Let $\omega_N$ be the Fubini--Study form on $\mathbb{P}^N$, normalized so that
\[
\int_{\mathbb{P}^N}\omega_N^N=1.
\]
By abuse of notation, we denote the metric induced by $\omega_N$ on $\mathcal{O}(1)$ by $\omega_N$. Take $C_0>0$ so that on $\mathbb{P}^N-H$,
\begin{equation}\label{eq:C0}
\omega_N=C_0 \ddc \log|Z_0|^2_{\omega_N}.
\end{equation}

For each $N\geq 1$, let 
\[
X_N:=X\times \mathbb{P}^N.
\]
Let $\pi_1^N,\pi_2^N$ be the natural projections:
\[
\begin{tikzcd}
  X_N \arrow[d,"\pi_1^N"] \arrow[r, "\pi_2^N"] & \mathbb{P}^N \\
  X &
\end{tikzcd}
\]
For simplicity, we denote $\pi_2^{N*}Z_A$ by $Z_A$ ($A=0,\ldots,N$), similar convention is used for $z_1,\ldots,z_N$. Similarly, we omit $\pi_1^{N*}$ from our notations from time to time.

Let
\[
\theta_N=\left(\pi^N_1\right)^*\theta+\left(\pi^N_2\right)^*\omega_N.
\]
Note that $[\theta_N]$ is a big class on $X_N$.

\subsection{Potentials with small unbounded loci}
For potentials $\varphi_1,\ldots,\varphi_p\in \PSH(X,\theta)$ ($p\leq n$), the non-pluripolar product 
\[
\theta_{\varphi_1}\wedge \cdots\wedge \theta_{\varphi_p}=\langle \theta_{\varphi_1}\wedge \cdots\wedge \theta_{\varphi_p} \rangle
\]
is a well-defined closed positive $(p,p)$-current on $X$ (\cite{BEGZ10} Proposition~1.6, Theorem~1.8). For a detailed study of these products, see \cite{BEGZ10}.

\begin{definition}
We say $\varphi\in \PSH(X,\theta)$ has \emph{small unbounded locus} if there is a closed  pluripolar subset $A\subseteq X$, such that $\varphi\in L^{\infty}_{\loc}(X-A)$.
\end{definition}
Examples of potentials with small unbounded loci include potentials with minimal singularities. There exists model potentials having non-vanishing Lelong number on a dense subset of $X$, and \emph{a fortiori} not having small unbounded loci (\cite{DDNL18c} Section~4).

\begin{proposition}\label{prop:smallun}
Let $\varphi_1,\ldots,\varphi_p\in \PSH(X,\theta)$ ($p\leq n$) be potentials with small unbounded loci. Let $A\subseteq X$ be a closed  pluripolar set, such that $\varphi_j\in L^{\infty}_{\loc}(X-A)$ for $j=1,\ldots,p$. Then
\[
\theta_{\varphi_1}\wedge \cdots \wedge \theta_{\varphi_p}=\mathds{1}_{X-A}\,\theta_{\varphi_1|_{X-A}}\wedge \cdots \wedge \theta_{\varphi_p|_{X-A}}.
\]
On RHS, the product is the usual Bedford--Taylor product (\cite{BT82}) on $X-A$, $\mathds{1}_{X-A}$ formally denotes the zero-extension to $X$.
\end{proposition}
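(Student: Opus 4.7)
The plan is to unwind the Bergman--Boucksom--Eyssidieux--Guedj--Zeriahi definition of the non-pluripolar product and exploit the fact, proved in \cite{BEGZ10}, that it places no mass on pluripolar sets. The statement is essentially local, so the strategy is to identify the two measures on $X-A$ using the plurifine locality of the Bedford--Taylor product, and then argue that both sides give zero mass to $A$.

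First, I recall the canonical cutoffs $\varphi_j^{(k)}:=\max(\varphi_j,-k)$, which are bounded $\theta$-psh functions, and let $U_k:=\bigcap_j\{\varphi_j>-k\}$, a plurifine open subset of $X$. By definition,
\[
\theta_{\varphi_1}\wedge\cdots\wedge\theta_{\varphi_p}=\lim_{k\to\infty}\mathds{1}_{U_k}\,\theta_{\varphi_1^{(k)}}\wedge\cdots\wedge\theta_{\varphi_p^{(k)}},
\]
where the right-hand side products are in the sense of Bedford--Taylor. The key point, which I would then invoke, is that this limiting measure puts no mass on any pluripolar set (\cite{BEGZ10} Proposition~1.4 and the construction of Section~1.1 therein). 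In particular, the full mass of $\theta_{\varphi_1}\wedge\cdots\wedge\theta_{\varphi_p}$ is concentrated on $X-A$, so it suffices to identify its restriction to $X-A$ with the Bedford--Taylor product $\theta_{\varphi_1|_{X-A}}\wedge\cdots\wedge\theta_{\varphi_p|_{X-A}}$.

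For this identification, I would fix an arbitrary relatively compact open set $K\Subset X-A$. Since each $\varphi_j$ is locally bounded on $X-A$, there exists $k_0$ such that $\varphi_j>-k_0$ on $K$ for every $j$, hence $\varphi_j^{(k)}=\varphi_j$ on $K$ for all $k\geq k_0$ and $K\subseteq U_k$. By plurifine locality of the Bedford--Taylor product on locally bounded potentials, the measure $\mathds{1}_{U_k}\,\theta_{\varphi_1^{(k)}}\wedge\cdots\wedge\theta_{\varphi_p^{(k)}}$ restricted to $K$ coincides with the genuine Bedford--Taylor product $\theta_{\varphi_1|_{X-A}}\wedge\cdots\wedge\theta_{\varphi_p|_{X-A}}$ on $K$, for every $k\geq k_0$. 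Passing to the limit $k\to\infty$ and letting $K$ exhaust $X-A$, this gives the desired equality on $X-A$.

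Combining the two steps yields the claimed identity as measures on $X$, with $\mathds{1}_{X-A}$ interpreted as zero-extension. The only real subtlety is the appeal to the fact that the non-pluripolar product charges no pluripolar set; everything else is a matter of unwinding definitions and applying plurifine locality in the bounded case. I do not foresee any serious obstacle here beyond bookkeeping: the statement is, in essence, the compatibility of the BEGZ construction with the classical Bedford--Taylor theory on the locally bounded locus, which is implicit in \cite{BEGZ10}.
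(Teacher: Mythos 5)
Your argument is correct and is essentially the standard compatibility proof from \cite{BEGZ10} (the paper itself offers no proof, only the citation to \cite{BEGZ10} Page~204): no mass on pluripolar sets by construction, plus locality of the Bedford--Taylor product on the open set $X-A$ where the canonical cutoffs eventually stabilize. The only imprecision is the choice of cutoff $\max(\varphi_j,-k)$, which need not be $\theta$-psh since the constant $-k$ is not $\theta$-psh for a general smooth representative of a big class; one should instead use $\max(\varphi_j,V_\theta-k)$ (with $V_\theta$ the potential with minimal singularities, which is locally bounded on $X-A$ because $\varphi_j\leq V_\theta+C$ there) or work with local psh potentials as in \cite{BEGZ10}, after which your argument goes through verbatim.
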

See \cite{BEGZ10} Page~204.

Technically Proposition~\ref{prop:smallun} means that one can always reduce a local statement about potentials with small unbounded loci to a corresponding statement about locally bounded potentials, namely, to the Bedford--Taylor theory.

\begin{theorem}\label{thm:weak1}Let $p\leq n$. Let $\alpha_0,\ldots,\alpha_p$ be big classes on $X$ with smooth representatives $\theta_0,\ldots,\theta_p$.

Let $W\subseteq X$ be an open subset. Let $\chi\in C^{\infty}_0(W)$, $\chi\geq 0$.
Let $\Theta$ be a fixed closed positive $(n-p,n-p)$-current on $W$.

Let $\varphi_j^k,\varphi_j$ be $\theta_j$-psh functions on $W$ ($j=0,\ldots,p$ and $k\in \mathbb{Z}_{>0}$). Let $\psi^k,\psi$ be $\theta_0$-psh functions on $W$.
Assume that there is a closed pluripolar set $S$, such that $\varphi_j^k,\psi^k$ are uniformly bounded on each compact subset of $\Supp \chi-S$. Assume that $\varphi_j^k$ decreases to $\varphi_j$ for any $j$ and that $\psi^k$ decreases to $\psi$. Assume furthermore that 
$\varphi_0^k-\psi^k$ is uniformly bounded on $\Supp \chi-S$. Finally assume that
\[
\chi\,\theta_{1,\varphi_1^k}\wedge \cdots \wedge \theta_{p,\varphi_p^k}\wedge \Theta\to \chi\,\theta_{1,\varphi_1}\wedge \cdots \wedge \theta_{p,\varphi_p}\wedge \Theta
\]
weakly on $W$.
Then
\[
\chi(\varphi_0^k-\psi^k)\,\theta_{1,\varphi_1^k}\wedge \cdots \wedge \theta_{p,\varphi_p^k}\wedge \Theta\to \chi(\varphi_0-\psi)\,\theta_{1,\varphi_1}\wedge \cdots \wedge \theta_{p,\varphi_p}\wedge \Theta
\]
weakly as signed measures on $W$ as $k\to\infty$.
\end{theorem}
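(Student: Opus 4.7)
The plan is a cutoff argument that separates the pluripolar set $S$ from the regular region where classical Bedford--Taylor theory applies. On $\Supp\chi\setminus S$ all potentials are locally bounded by hypothesis, so classical multiplication continuity handles the main contribution. Near $S$, the uniform bound on $\varphi_0^k-\psi^k$ combined with the fact that non-pluripolar products do not charge $S$ disposes of the remaining contribution.

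Concretely, set $T^k:=\theta_{1,\varphi_1^k}\wedge\cdots\wedge\theta_{p,\varphi_p^k}\wedge\Theta$ and let $T$ be the analogous limit. Choose smooth cutoffs $\rho_m\in C^\infty_0(W)$ with $0\leq\rho_m\leq 1$, $\rho_m\equiv 1$ on a neighborhood of $S\cap\Supp\chi$ shrinking to $S\cap\Supp\chi$ as $m\to\infty$, and $\rho_m\to\mathds{1}_{S\cap\Supp\chi}$ pointwise on $\Supp\chi$. For a continuous compactly supported test function $f$ on $W$, split
\[
\int f\chi(\varphi_0^k-\psi^k)T^k=\int f\chi\rho_m(\varphi_0^k-\psi^k)T^k+\int f\chi(1-\rho_m)(\varphi_0^k-\psi^k)T^k.
\]

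For the error term, let $M$ denote the uniform bound on $|\varphi_0^k-\psi^k|$ over $\Supp\chi\setminus S$. Since $T^k$ does not charge $S$,
\[
\left|\int f\chi\rho_m(\varphi_0^k-\psi^k)T^k\right|\leq\|f\|_\infty M\int\chi\rho_m T^k.
\]
By the weak convergence hypothesis applied to the continuous test function $\chi\rho_m$, $\int\chi\rho_m T^k\to\int\chi\rho_m T$ as $k\to\infty$; and $\int\chi\rho_m T\to 0$ as $m\to\infty$ by dominated convergence, since $\chi T$ is a finite Radon measure not charging $S$. The same estimate controls the corresponding piece of the limit integrand.

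For the main term, $f\chi(1-\rho_m)$ has compact support inside $W\setminus S$, where all $\varphi_j^k,\psi^k$ are locally bounded. Applying the classical Bedford--Taylor multiplication continuity \cite{BT82} separately to the two decreasing sequences $\varphi_0^k\downarrow\varphi_0$ and $\psi^k\downarrow\psi$ against $\theta_{1,\varphi_1^k}\wedge\cdots\wedge\theta_{p,\varphi_p^k}\wedge\Theta$ on $W\setminus S$, then subtracting, yields
\[
\int f\chi(1-\rho_m)(\varphi_0^k-\psi^k)T^k\longrightarrow\int f\chi(1-\rho_m)(\varphi_0-\psi)T\quad\text{as }k\to\infty.
\]
Combining the two pieces and sending $k\to\infty$ followed by $m\to\infty$ gives the claim. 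The main delicate point is to make the contribution near $S$ vanish uniformly in $k$: this is achieved by first passing to the $k$-limit through the continuity of $\chi\rho_m$ as a test function, and only afterwards shrinking $\rho_m$ using the non-pluripolarity of $T$. The uniform boundedness of $\varphi_0^k-\psi^k$ on $\Supp\chi\setminus S$ (rather than merely on compact subsets thereof) is precisely what allows this two-step limit.
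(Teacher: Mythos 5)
Your argument is correct and is essentially the route the paper takes (namely the proof of \cite{DDNL18c} Lemma~4.1 that it cites): a cutoff isolating the closed pluripolar set $S$, with the contribution near $S$ controlled by the uniform bound on $\varphi_0^k-\psi^k$ together with the two-step mass limit $\int\chi\rho_m\,T^k\to\int\chi\rho_m\,T\to 0$, and the remaining term handled by the monotone convergence theorem for locally bounded potentials (the role played by \cite{GZ17} Theorem~3.18 in the paper). The one point you should make explicit is that ``$T^k$ does not charge $S$'' is really a matter of how the wedge with the arbitrary closed positive current $\Theta$ is interpreted --- as the zero extension of the Bedford--Taylor product from $\Supp\chi- S$, in the spirit of Proposition~\ref{prop:smallun} --- since for a general closed positive $\Theta$ a product such as $\ddc u\wedge\Theta$ with $u$ bounded can charge pluripolar sets.
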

\begin{proof}
The proof is almost identical to that of \cite{DDNL18c} Lemma~4.1. We need \cite{GZ17} Theorem~3.18 for the local convergence.
\end{proof}

\begin{theorem}\label{thm:weak2} Let $\alpha_1,\ldots,\alpha_n$ be big classes on $X$ with smooth representatives $\theta_1,\ldots,\theta_n$.
Let $\varphi_j^k,\varphi_j\in \PSH(X,\theta_j)$ ($j=1,\ldots,n$ and $k\in \mathbb{Z}_{>0}$). Assume that there is a closed pluripolar set $S$, such that $\varphi_j^k$ are uniformly bounded on each compact subset of $X-S$. Assume that $\varphi_j^k$ converges to $\varphi_j$ in capacity for any $j$. Let $f^k$ be a sequence of uniformly bounded quasi-continuous functions converging in capacity to a bounded quasi-continuous function $f$ on $X$.
Finally assume that
\[
\theta_{1,\varphi_1^k}\wedge \cdots \wedge \theta_{n,\varphi_n^k}\to \theta_{1,\varphi_1}\wedge \cdots \wedge \theta_{n,\varphi_n}
\]
weakly on $X$.
Then
\[
f^k\,\theta_{1,\varphi_1^k}\wedge \cdots \wedge \theta_{n,\varphi_n^k}\to f\,\theta_{1,\varphi_1}\wedge \cdots \wedge \theta_{n,\varphi_n}
\]
weakly as $k\to\infty$.
\end{theorem}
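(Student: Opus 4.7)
The plan is to fix a continuous test function $g \in C(X)$, write
\[
\int_X g f^k\, \mu^k - \int_X g f\, \mu = \int_X g(f^k - f)\, \mu^k + \left(\int_X g f\, \mu^k - \int_X g f\, \mu\right)
\]
with $\mu^k = \theta_{1,\varphi_1^k}\wedge\cdots\wedge\theta_{n,\varphi_n^k}$ and $\mu$ its limit, and control both pieces by a single \emph{key estimate}: if $E^k \subseteq X$ are Borel sets with $\operatorname{Cap}(E^k) \to 0$, then $\mu^k(E^k) \to 0$. I would establish this key estimate first.

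To prove the key estimate, observe that the weak convergence $\mu^k \to \mu$ tested against the constant $1$ yields $\mu^k(X) \to \mu(X)$. Since $\mu$ is Radon and does not charge the closed pluripolar set $S$, for each $\eta > 0$ outer regularity produces open sets $V' \Subset V$ with $S \subset V'$, $\operatorname{Cap}(V) < \eta$, and $\mu(\overline{V'}) < \eta$. The Portmanteau theorem applied to the compact set $\overline{V'}$, combined with the total mass convergence, gives $\limsup_k \mu^k(\overline{V'}) \le \mu(\overline{V'}) < \eta$. On $X \setminus V'$ the $\varphi_j^k$ are uniformly bounded by hypothesis, so the classical Bedford--Taylor capacity estimate (\cite{GZ17}) supplies a constant $C$ independent of $k$ with $\mu^k(E \setminus V') \le C \operatorname{Cap}(E)$ for any Borel $E \subseteq X$. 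Applied to $E = E^k$, this gives $\limsup_k \mu^k(E^k) \le \eta$, and $\eta$ is arbitrary.

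Granting the key estimate, piece B is handled via quasi-continuity: for each $\epsilon > 0$ there is an open $U_\epsilon$ with $\operatorname{Cap}(U_\epsilon) < \epsilon$ on whose complement $f$ is continuous, and by Tietze extension one obtains $\tilde f_\epsilon \in C(X)$ agreeing with $f$ on $X \setminus U_\epsilon$ with $\|\tilde f_\epsilon\|_\infty \le \|f\|_\infty$. The hypothesis gives $\int g \tilde f_\epsilon\, \mu^k \to \int g \tilde f_\epsilon\, \mu$, while the defects $\left|\int g(f - \tilde f_\epsilon)\, \mu^k\right|$ and $\left|\int g(f - \tilde f_\epsilon)\, \mu\right|$ are bounded by $2\|g\|_\infty \|f\|_\infty$ times $\mu^k(U_\epsilon)$ or $\mu(U_\epsilon)$, both controlled uniformly in $k$ by the key estimate. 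Piece A is similar: setting $E^k_\delta := \{|f^k - f| > \delta\}$, convergence in capacity gives $\operatorname{Cap}(E^k_\delta) \to 0$, so splitting the integral on $E^k_\delta$ versus its complement and using uniform boundedness of $f^k - f$ and of $\mu^k(X)$ yields $\limsup_k \left|\int g(f^k - f)\,\mu^k\right| \le \delta \|g\|_\infty \sup_k \mu^k(X)$, after which $\delta \to 0$.

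The main obstacle is the key estimate. Without uniform global bounds on the $\varphi_j^k$ the Bedford--Taylor estimate $\mu^k(\cdot) \le C\,\operatorname{Cap}(\cdot)$ does not apply on all of $X$; the pluripolar set $S$ must serve as a shield, near which the $\mu^k$-masses are forced to be small uniformly in $k$ (through outer regularity of $\mu$ and the upper-semicontinuous direction of Portmanteau), and outside of which the classical estimate delivers the capacity bound.
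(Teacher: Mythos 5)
Your proposal is correct and follows essentially the same route as the paper: the paper simply defers to the proof of \cite{DDNL18c} Lemma~4.1, which is exactly this argument — control the mass of $\mu^k$ near the pluripolar set $S$ via weak convergence, Portmanteau and the fact that the non-pluripolar limit does not charge $S$, use the Bedford--Taylor comparison with capacity where the potentials are uniformly bounded, and handle $f^k\to f$ and the quasi-continuity of $f$ by capacity estimates. The only cosmetic remark is that your ``key estimate'' is invoked in Piece B for a fixed set $U_\epsilon$ rather than a sequence with capacity tending to $0$; what you actually need (and what your proof of the key estimate delivers) is the quantitative bound $\limsup_k\mu^k(E)\le C\operatorname{Cap}(E)+\eta$ for every $\eta>0$.
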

\begin{proof}
The proof is almost identical to that of \cite{DDNL18c} Lemma~4.1.
\end{proof}
\begin{remark}
One can also state Theorem~\ref{thm:weak2} in a local way as Theorem~\ref{thm:weak1}. We do not need the local version of Theorem~\ref{thm:weak2} in this paper.
\end{remark}

\begin{theorem}[\cite{BEGZ10} Theorem~1.14]
 Let $\varphi_1,\varphi_2,\psi_1,\psi_2\in \PSH(X,\theta)$. Assume that all four potentials have small unbounded loci. Let $u=\varphi_1-\varphi_2$, $v=\psi_1-\psi_2$. Let $\Theta$ be a closed positive $(n-1,n-1)$-current on $X$.
 Assume that $u,v\in L^{\infty}(X)$. Then
\[
\int_X u\,\ddc v \wedge \Theta= \int_X v\,\ddc u \wedge \Theta=-\int_X \mathrm{d}v\wedge \mathrm{d}^{\mathrm{c}}u\wedge \Theta.
\]
\end{theorem}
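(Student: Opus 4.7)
The plan is to reduce to the bounded case by the standard truncation $\varphi_j\rightsquigarrow\max(\varphi_j,-k)$, invoke Stokes's theorem on the closed manifold $X$ for bounded psh potentials, and pass to the limit $k\to\infty$ using Theorem~\ref{thm:weak1}.

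Set $\varphi_j^k:=\max(\varphi_j,-k)$ and $\psi_j^k:=\max(\psi_j,-k)$ for $j=1,2$; these are bounded $\theta$-psh functions decreasing to $\varphi_j,\psi_j$, and the pointwise contraction $|\max(a,c)-\max(b,c)|\le|a-b|$ gives the uniform bounds $\|\varphi_1^k-\varphi_2^k\|_\infty\le\|u\|_\infty$ and $\|\psi_1^k-\psi_2^k\|_\infty\le\|v\|_\infty$. Let $A\subseteq X$ be a closed pluripolar set outside which all four original potentials are locally bounded (existing by the small unbounded locus hypothesis); on any compact subset of $X-A$ each approximant coincides with its limit for $k$ large enough, so the $\varphi_j^k,\psi_j^k$ are uniformly bounded on compacta of $X-A$. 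For these bounded approximants, Proposition~\ref{prop:smallun} identifies the non-pluripolar products with the Bedford--Taylor ones, and Stokes's theorem applied after smooth regularization yields, with $u^k:=\varphi_1^k-\varphi_2^k$ and $v^k:=\psi_1^k-\psi_2^k$,
\[
\int_X u^k\,\ddc v^k\wedge\Theta=-\int_X\mathrm{d}u^k\wedge\mathrm{d}^{\mathrm{c}}v^k\wedge\Theta=-\int_X\mathrm{d}v^k\wedge\mathrm{d}^{\mathrm{c}}u^k\wedge\Theta=\int_X v^k\,\ddc u^k\wedge\Theta,
\]
where the middle equality uses that $\mathrm{d}u\wedge\mathrm{d}^{\mathrm{c}}v-\mathrm{d}v\wedge\mathrm{d}^{\mathrm{c}}u$ has pure bidegree $(2,0)+(0,2)$ and so annihilates any current of bidegree $(n-1,n-1)$ on an $n$-manifold.

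To pass to the limit, decompose $\ddc v^k\wedge\Theta=\theta_{\psi_1^k}\wedge\Theta-\theta_{\psi_2^k}\wedge\Theta$; Bedford--Taylor continuity on $X-A$ combined with Proposition~\ref{prop:smallun} gives $\theta_{\psi_j^k}\wedge\Theta\to\theta_{\psi_j}\wedge\Theta$ weakly on $X$ for $j=1,2$. Apply Theorem~\ref{thm:weak1} with $W=X$, $\chi\equiv 1$, $S=A$, $p=1$, and with the roles of $(\varphi_0^k,\psi^k,\varphi_1^k)$ in that theorem's statement played here by $(\varphi_1^k,\varphi_2^k,\psi_j^k)$: the uniform boundedness of $u^k=\varphi_0^k-\psi^k$ verifies the crucial hypothesis, and the conclusion gives $u^k\,\theta_{\psi_j^k}\wedge\Theta\to u\,\theta_{\psi_j}\wedge\Theta$ weakly as signed measures. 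Subtracting $j=2$ from $j=1$ yields $\int_X u^k\,\ddc v^k\wedge\Theta\to\int_X u\,\ddc v\wedge\Theta$, and swapping the roles of $u$ and $v$ gives the symmetric side. The third equality is obtained most cheaply from the identity $\int_X\ddc(u^kv^k)\wedge\Theta=0$ in the bounded case, which after Leibniz and the symmetry above reads $I_1^k+I_2^k+2I_3^k=0$ with $I_1^k,I_2^k,I_3^k$ the three integrals above; passing to the limit in $I_1^k$ and $I_2^k$ fixes the limit of $I_3^k$, and Bedford--Taylor continuity on $X-A$ matches this limit with $\int_X\mathrm{d}v\wedge\mathrm{d}^{\mathrm{c}}u\wedge\Theta$.

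The main obstacle is precisely this final passage to the limit: upgrading the weak convergence of Monge--Amp\`ere products to that of the same products multiplied by the globally bounded but only quasi-continuous differences $u^k,v^k$ is not automatic, and Theorem~\ref{thm:weak1} is the purpose-built tool. Checking its hypotheses demands care in separating the \emph{global} uniform bound on the differences (automatic from the $\max$ contraction) from the merely \emph{local} uniform bound on the individual potentials on compacta of $X-A$ (which comes from the small unbounded locus assumption).
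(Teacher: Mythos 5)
This statement is quoted from \cite{BEGZ10} (Theorem~1.14) and the paper offers no proof of its own, so your argument has to stand on its own; it does not, because of the limit passage you yourself identify as the main obstacle. The fatal step is the claim that ``Bedford--Taylor continuity on $X-A$ combined with Proposition~\ref{prop:smallun} gives $\theta_{\psi_j^k}\wedge\Theta\to\theta_{\psi_j}\wedge\Theta$ weakly on $X$.'' Bedford--Taylor continuity only gives weak convergence on the open set $X-A$; on all of $X$ it is false in general, because mass can escape to $A$ in the limit. Indeed $\int_X\theta_{\psi_j^k}\wedge\Theta$ is independent of $k$ (Stokes for bounded potentials), whereas the non-pluripolar product $\langle\theta_{\psi_j}\wedge\Theta\rangle$ may have strictly smaller total mass: potentials with small unbounded loci need not have full Monge--Amp\`ere mass. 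For example, on $\mathbb{P}^2$ with $\psi_j$ having logarithmic singularities along a divisor $D$, $\theta_{\psi_j^k}\wedge\Theta$ converges to a measure carried by $D$ while $\langle\theta_{\psi_j}\wedge\Theta\rangle$ vanishes there. Consequently the convergence hypothesis of Theorem~\ref{thm:weak1} with $W=X$, $\chi\equiv 1$ is simply not satisfiable in the generality of the statement, and your application of that theorem collapses. What would actually be needed is a cancellation between the residual masses on $A$ of the two terms $\theta_{\psi_1^k}\wedge\Theta$ and $\theta_{\psi_2^k}\wedge\Theta$ (plausible since $[\psi_1]=[\psi_2]$ and $v^k$ is uniformly bounded), but establishing that cancellation, together with the behaviour of the bounded factor $u^k$ against the escaping mass, is precisely the content of the theorem and is nowhere argued.

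For comparison, the proof in \cite{BEGZ10} does not truncate globally and invoke Stokes on $X$; it works directly on $U=X-A$, shows via Cauchy--Schwarz that the currents $u\,\ddc v\wedge\Theta$ and $\mathrm{d}u\wedge\mathrm{d}^{\mathrm{c}}v\wedge\Theta$ have locally finite mass near $A$ and extend by zero across the closed complete pluripolar set $A$ as closed (resp.\ exact) currents, and then integrates by parts on $U$ using cutoff functions built from a quasi-psh function with poles along $A$. That route avoids the mass-escape problem entirely, because no global approximation by bounded potentials is ever performed. If you want to salvage your truncation approach, you must either restrict $\chi$ to be supported in $X-A$ and then separately control the boundary contribution near $A$ as $\chi\uparrow 1$, or prove directly that $\ddc v^k\wedge\Theta\to\ddc v\wedge\Theta$ as signed measures on $X$ (using $[\psi_1]=[\psi_2]$); neither is supplied. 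The algebraic parts of your argument (the bidegree $(2,0)+(0,2)$ observation and the Leibniz identity for bounded potentials) are correct.
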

The exact meaning of each term is explained in \cite{BEGZ10} Page~214. In \cite{BEGZ10}, integrals of this kind are denoted by $\int_{X-A}$ instead of $\int_X$ with $A$ being the unbounded locus of these potentials.
\subsection{Witt Nystr\"om construction}
In \cite{WN17}, Witt Nystr\"om provides a construction that reduces a problem about a general potential in $\PSH(X,\theta)$ to potentials with small unbounded loci. We briefly review this construction here.

Fix $\eta\in \PSH(X,\theta)$ such that 
\begin{enumerate}
    \item $\eta\in C^{\infty}(X-Z)$.
    \item $\eta\leq 0$.
\end{enumerate}
Recall that $Z$ is the complement of the ample locus of $\alpha$.
We may even assume that $\eta$ has analytic singularity by \cite{Bou04} Theorem~3.17. 

Let $W\subseteq X$ be an open subset.  
Let $\varphi$ be a $\theta$-psh function on $W$. We define $\Phi_N[\varphi]\in \PSH(W\times \mathbb{P}^N,\theta_N)$ by
\[
\Phi_N[\varphi]:=\sups_{\!\!\!\alpha\in \Sigma_N} \left((1-|\alpha|)(\eta+C_0\log |Z_0|^2_{\omega_N})+|\alpha|\varphi+\sum_{a=1}^N \alpha_a C_0\log |Z_a|_{\omega_N}^2- \alpha^2\right),
\]
where $C_0$ is defined in \eqref{eq:C0}.

In particular, taking $\alpha=0$, we find that
\begin{equation}\label{eq:lo}
\Phi_N[\varphi]\geq \eta+C_0\log|Z_0|_{\omega_N}^2.
\end{equation}
So $\Phi_N[\varphi]$ has small unbounded locus if $W=X$.

Observe that $\Phi_N[\varphi]$ is increasing in $\varphi$.

Define $\hat{\alpha}_a=\hat{\alpha}_a[\varphi]:(W-Z)\times \mathbb{C}^N \rightarrow [-\infty,\infty)$ $(a=1,\ldots,N)$ by
\begin{equation}\label{eq:alphaa}
\hat{\alpha}_a:=\frac{C_0\log|z_a|^2+\varphi-\eta}{2}.
\end{equation}
Observe that $\hat{\alpha}_a$ is usc.

We define $\hat{\alpha}:(W-Z)\times \mathbb{C}^N \rightarrow [-\infty,\infty)^N$ by
\begin{equation}\label{eq:alpha}
\hat{\alpha}=(\hat{\alpha}_1,\ldots,\hat{\alpha}_N).
\end{equation}

\begin{lemma}\label{lma:phin}
Let $W\subseteq X$ be an open subset.  
Let $\varphi$ be a $\theta$-psh function on $W$.
\begin{equation}\label{eq:PhiN}
\Phi_N[\varphi]=\sup_{\alpha\in \Sigma_N} \left((1-|\alpha|)(\eta+C_0\log |Z_0|^2_{\omega_N})+|\alpha|\varphi+\sum_{a=1}^N \alpha_a C_0\log |Z_a|_{\omega_N}^2-\alpha^2\right)
\end{equation}
on $(W-Z)\times (\mathbb{P}^N-H)$. Moreover, on this set,
\begin{equation}\label{eq:PhiN1}
\Phi_N[\varphi]=C_0\log|Z_0|_{\omega_N}^2+\eta-g_N\circ \hat{\alpha}[\varphi],
\end{equation}
where $g_N$ is the function defined in Appendix~\ref{sec:quad}.
\end{lemma}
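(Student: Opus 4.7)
The plan is to verify both assertions by algebraic rearrangement of the expression inside the sup, converting it into a quadratic function of $\alpha\in\Sigma_N$ whose sup is the function $g_N$ of Appendix~\ref{sec:quad}. The restriction to $(W-Z)\times(\mathbb{P}^N-H)$ is exactly what is needed for the rewriting to make sense: on $W-Z$ the potential $\eta$ is smooth, and on $\mathbb{P}^N-H$ the section $Z_0$ is nonvanishing, so $C_0\log|Z_0|^2_{\omega_N}$ is smooth and the identity $C_0\log|Z_a|^2_{\omega_N}-C_0\log|Z_0|^2_{\omega_N}=C_0\log|z_a|^2$ holds.

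First I would expand $(1-|\alpha|)(\eta+C_0\log|Z_0|^2_{\omega_N})$ using $|\alpha|=\sum_a\alpha_a$ and collect the $a$-indexed terms. Substituting $C_0\log|Z_a|^2_{\omega_N}=C_0\log|Z_0|^2_{\omega_N}+C_0\log|z_a|^2$, the bracketed quantity becomes
\[
\eta+C_0\log|Z_0|^2_{\omega_N}+\sum_{a=1}^N\alpha_a\bigl(\varphi-\eta+C_0\log|z_a|^2\bigr)-\alpha^2,
\]
and by the definition \eqref{eq:alphaa} of $\hat\alpha_a[\varphi]$ the parenthesis equals $2\hat\alpha_a[\varphi]$. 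So the expression simplifies to
\[
\eta+C_0\log|Z_0|^2_{\omega_N}+2\langle\alpha,\hat\alpha[\varphi]\rangle-\alpha^2.
\]
Taking the ordinary sup over $\alpha\in\Sigma_N$ then yields $\eta+C_0\log|Z_0|^2_{\omega_N}-g_N(\hat\alpha[\varphi])$, by the very definition of $g_N$ in Appendix~\ref{sec:quad}. This establishes \eqref{eq:PhiN1}, modulo identifying the ordinary sup with the upper-semi-continuously regularized sup appearing in the definition of $\Phi_N[\varphi]$.

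For that last identification, I would observe that on $(W-Z)\times(\mathbb{P}^N-H)$ the function being maximized is the sum of a smooth piece (namely $\eta+C_0\log|Z_0|^2_{\omega_N}$, multiplied by $(1-|\alpha|)$, together with $-\alpha^2$), of $|\alpha|\varphi$ whose $(x,z)$-dependence is usc since $\varphi$ is, and of $\alpha_a C_0\log|Z_a|^2_{\omega_N}$, which is usc on $\mathbb{P}^N$. Hence for each $\alpha\in\Sigma_N$ the summand is usc in $(x,z)$, and the whole expression depends continuously on $\alpha$. Since $\Sigma_N$ is compact, the sup of such a jointly-in-$\alpha$-continuous family of usc functions is again usc, so the ordinary sup already equals its usc regularization on $(W-Z)\times(\mathbb{P}^N-H)$, which gives \eqref{eq:PhiN}.

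I do not expect a serious obstacle: the argument is bookkeeping once the decomposition $C_0\log|Z_a|^2_{\omega_N}-C_0\log|Z_0|^2_{\omega_N}=C_0\log|z_a|^2$ is in place. The only mildly delicate point is keeping track of where each factor is smooth versus merely usc, which is precisely controlled by the two removals $Z$ and $H$.
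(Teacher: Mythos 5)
Your computation is the same as the paper's: complete the square using $C_0\log|Z_a|^2_{\omega_N}-C_0\log|Z_0|^2_{\omega_N}=C_0\log|z_a|^2$, recognize the bracket as $\eta+C_0\log|Z_0|^2_{\omega_N}+2\langle\alpha,\hat\alpha[\varphi]\rangle-\alpha^2$, and identify the sup with $-g_N\circ\hat\alpha[\varphi]$; this part is correct and is exactly how the paper proceeds. The one place where your justification is weaker than it should be is the final usc step: the principle ``for each $\alpha$ the function is usc in $(x,z)$, the dependence on $\alpha$ is continuous, and $\Sigma_N$ is compact, hence the sup is usc'' is false in general for merely separately continuous/usc families (a moving bump $F(\alpha,y)=\max\bigl(0,1-|\alpha-y|/y\bigr)$ on $[0,1]^2$ is a counterexample). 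What saves the argument here is that the expression is \emph{jointly} usc on $\Sigma_N\times(W-Z)\times(\mathbb{P}^N-H)$ — it is affine-plus-quadratic in $\alpha$ with usc, locally bounded above coefficients — and the sup of a jointly usc function over a compact parameter set is usc; alternatively one can argue as the paper does, composing the usc map $\hat\alpha[\varphi]$ with the function $g_N$, which extends continuously and monotonically (decreasingly) to $[-\infty,\infty)^N$ by Proposition~\ref{prop:gdecrease}, so that $-g_N\circ\hat\alpha[\varphi]$ is usc. Either repair is one line, so this is a gap in justification rather than in the argument itself.
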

\begin{proof}
In order to prove \eqref{eq:PhiN}, it suffices to show that the RHS of \eqref{eq:PhiN} is usc on $(W-Z)\times (\mathbb{P}^N-H)$.

Since $\log|Z_0|_{\omega_N}^2$ is obviously continuous, it suffices to prove that the following function is usc on $(W-Z)\times \mathbb{C}^N$:
\[
\begin{split}
I:=& \sup_{\alpha\in \Sigma_N} \left((1-|\alpha|)\eta+|\alpha|\varphi+\sum_{a=1}^N \alpha_a C_0 \log |z_a|^2-\sum_{a=1}^N \alpha_a^2\right) \\
=&\eta-g_N\circ \hat{\alpha}
\end{split}
\]
by completing the square.

Since $\hat{\alpha}_a$ is usc and $g_N$ is continuous and decreasing (Proposition~\ref{prop:gdecrease}), we conclude that $I$ is usc. Moreover, \eqref{eq:PhiN1} is implied by our calculation.
\end{proof}

In particular, let $\varphi_j,\varphi$ $(j\in \mathbb{Z}_{>0})$ be $\theta$-psh functions on $W$.
If $\varphi_j$ converges to $\varphi$ outside a pluripolar set, then $\Phi_N[\varphi_j]$ also converges to $\Phi_N[\varphi]$ outside a pluripolar set.

\begin{theorem}[\cite{WN17}\footnote{In \cite{WN17}, the normalizing constant is missing.}]\label{thm:WN}
Let $\varphi$ be a $\theta$-psh function on $W\subseteq X$, $N\geq 1$, we have
\[
\pi^N_{1*}\theta_{N,\Phi_N[\varphi]}^{N+n}=\binom{N+n}{n}N\int_0^1 \theta_{(1-t)\eta+t\varphi}^nt^{N-1}\,\mathrm{d}t.
\]
\end{theorem}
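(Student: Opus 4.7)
The statement is due to Witt Nystr\"om; since \cite{WN17} omits the normalizing constant, the task is to track it carefully through an explicit computation. The plan is to work on the open dense chart $U:=(X-Z)\times(\mathbb{P}^N-H)$, where Lemma~\ref{lma:phin} gives the closed form $\Phi_N[\varphi]=C_0\log|Z_0|^2_{\omega_N}+\eta-g_N\circ\hat\alpha$, perform a toric change of variables in the $\mathbb{P}^N$-factor, and finally parameterize the simplex $\Sigma_N$ radially.

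First, by \eqref{eq:lo} the potential $\Phi_N[\varphi]$ has small unbounded locus and is locally bounded on $U$, so Proposition~\ref{prop:smallun} lets us compute $\theta_{N,\Phi_N[\varphi]}^{N+n}$ as the Bedford--Taylor product on $U$; the complement of $U$ in $X\times\mathbb{P}^N$ is closed pluripolar and carries no non-pluripolar mass. On $U$ I would then combine \eqref{eq:PhiN1} with the identity $\omega_N=C_0\ddc\log|Z_0|^2_{\omega_N}$ to rewrite $\theta_{N,\Phi_N[\varphi]}$ explicitly in terms of $\theta_\eta$, $\omega_N$ and $\ddc(g_N\circ\hat\alpha)$. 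A multinomial expansion of the $(N+n)$th wedge power, combined with the bidegree constraint on $\pi^N_{1*}$ (only contributions of bidegree $(n,n)$ along $X$ and $(N,N)$ along $\mathbb{P}^N$ survive), isolates the pieces that actually contribute to the push-forward.

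Now comes the decisive step: the toric change of variables in the $\mathbb{P}^N$-fiber. Because $\hat\alpha_a=(C_0\log|z_a|^2+\varphi-\eta)/2$ depends on $z_a$ only through $|z_a|$, the fiber integral reduces to a real integral over the image of $\hat\alpha$. The appendix properties of $g_N$ (convexity and the quadratic identity on $\Sigma_N$) select the optimizer $\alpha^*(x,z)\in\Sigma_N$ realizing the supremum in \eqref{eq:PhiN}, and show that the transverse $X$-factor piece of $\theta_{N,\Phi_N[\varphi]}$ at fiber-optimizer $\alpha$ is exactly the interpolated Monge--Amp\`ere $\theta^n_{(1-|\alpha|)\eta+|\alpha|\varphi}$. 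Parameterizing $\Sigma_N$ by $\alpha=tu$ with $t=|\alpha|\in[0,1]$ and $u$ in the standard $(N-1)$-simplex, the Lebesgue measure factors as $d\alpha=t^{N-1}\,dt\,du$; integrating out $u$ produces a combinatorial constant that combines with the multinomial coefficient to yield exactly $\binom{N+n}{n}N$.

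I expect the main obstacle to be precisely this toric/Legendre change of variables: correctly matching the Jacobian and combinatorial constants, identifying the interpolant $(1-|\alpha|)\eta+|\alpha|\varphi$ through the optimizer of $g_N$, and dealing with the pluripolar loci where $\hat\alpha$ takes the value $-\infty$ (for example where $\varphi=-\infty$). This is exactly the point at which the normalizing constant was lost in \cite{WN17}, and it is where the bulk of the work lies.
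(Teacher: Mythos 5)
The paper does not actually prove this theorem: it is quoted from \cite{WN17}, and the only original contribution here is the corrected normalizing constant $\binom{N+n}{n}N$. Your outline does follow the right route --- it is essentially Witt Nystr\"om's computation, and the same computation is reproduced in this paper in Step~3 of the proof of Lemma~\ref{lma:localesti1} (restrict to the set where $\Phi_N[\varphi]$ is $C^{1,1}$, write $\theta_{N,\Phi_N[\varphi]}=(1-|\hat\alpha|)\theta_\eta+|\hat\alpha|\theta_\varphi+\omega_N+\sum_a \mathrm{d}\hat\alpha_a\wedge\mathrm{d}^{\mathrm{c}}\hat\alpha_a$ on $V_N$, integrate over the fiber, push forward by the log map and the gradient map to $\Sigma_N$, and slice $\Sigma_N$ radially to get $N!\int_{\Sigma_N}h(|\alpha|)\,\mathrm{d}\alpha=N\int_0^1h(t)t^{N-1}\,\mathrm{d}t$).

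However, as written your proposal has genuine gaps rather than just unchecked details. First, you defer exactly the step that constitutes the theorem: you say the toric/Legendre change of variables and the matching of the constant is ``where the bulk of the work lies'' and that you \emph{expect} it to work out --- but since the whole point of the footnote is that this constant was computed incorrectly in \cite{WN17}, the verification of the Jacobian and the factor $\binom{N+n}{n}N$ cannot be left as an expectation; it is the content to be proved. Second, you never establish the key support statement that $\theta_{N,\Phi_N[\varphi]}^{N+n}$ is concentrated on $V_N=\hat\alpha^{-1}(\mathring\Sigma_N)$ (Step~2 of Lemma~\ref{lma:localesti1}, quoting \cite{WN17} p.~5); without it the multinomial expansion you propose does not localize to the set where the explicit formula for $\theta_{N,\Phi_N[\varphi]}$ holds. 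Third, for a general $\theta$-psh $\varphi$ the function $\Phi_N[\varphi]$ is not $C^{1,1}$, so the fiberwise gradient push-forward is not directly meaningful; one must first regularize ($\varphi$ decreasing limits of smooth or bounded potentials, as in Step~1 of Lemma~\ref{lma:localesti1} and \cite{WN17} p.~7) and pass to the limit using continuity of the Bedford--Taylor product, a reduction your sketch omits. Also note a small imprecision: the fiber directions $\mathrm{d}\hat\alpha_a\wedge\mathrm{d}^{\mathrm{c}}\hat\alpha_a$ are not purely along $\mathbb{P}^N$ (since $\hat\alpha_a$ depends on $x$ through $\varphi-\eta$), so the ``bidegree constraint'' you invoke to isolate the contributing terms needs the fiber-restriction argument of \cite{WN17}, not a naive multinomial bookkeeping.
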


Evaluating the beta function, we get
\[
N\int_0^1 \theta_{(1-t)\eta+t\varphi}^nt^{N-1}\,\mathrm{d}t=\sum_{j=0}^n \binom{n}{j}\frac{j! (n-j+N-1)! N}{(n+N)!} \,\theta_{\eta}^j\wedge \theta_{\varphi}^{n-j}.
\]
As $N\to\infty$, the only non-vanishing terms is that of $j=0$, so
\begin{equation}\label{eq:convmeasure}
\binom{N+n}{n}^{-1}\pi^N_{1*}\theta_{N,\Phi_N[\varphi]}^{N+n}\to \theta_{\varphi}^n
\end{equation}
in \emph{total variation}.

\subsection{Integration by parts}
In the sequel, for $\varphi_1,\varphi_2\in \PSH(X,\theta)$, we write $[\varphi_1]=[\varphi_2]$ if there is a constant $C>0$, such that
\[
\varphi_2-C\leq \varphi_1\leq \varphi_2+C.
\]
\begin{theorem}\label{thm:intbypart1}
Let $\gamma_1,\ldots,\gamma_{n-1}\in \PSH(X,\theta)$. Let $\varphi_j,\psi_j\in \PSH(X,\theta)$ ($j=1,2$). Let $u=\varphi_1-\varphi_2$, $v=\psi_1-\psi_2$. Assume that 
\[
[\varphi_1]=[\varphi_2],\quad [\psi_1]=[\psi_2].
\]
Then
\begin{equation}\label{eq:ibp1}
\int_X u\,\ddc v \wedge \theta_{\gamma_1}\wedge \cdots\wedge \cdots \wedge   \theta_{\gamma_{n-1}}= \int_X v\,\ddc u \wedge \theta_{\gamma_1}\wedge \cdots\wedge \cdots \wedge\theta_{\gamma_{n-1}}.
\end{equation}
\end{theorem}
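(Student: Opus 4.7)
The plan is to lift the entire setup to $X_N=X\times\mathbb{P}^N$ by the Witt Nystr\"om construction, apply the BEGZ integration by parts formula there (where all potentials have small unbounded loci by \eqref{eq:lo}), and then descend back to $X$ by pushforward, controlling the limit $N\to\infty$ through \eqref{eq:convmeasure} and its mixed analogue.

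First I set $U_N:=\Phi_N[\varphi_1]-\Phi_N[\varphi_2]$ and $V_N:=\Phi_N[\psi_1]-\Phi_N[\psi_2]$ on $X_N$. In the supremum defining $\Phi_N[\varphi]$ in \eqref{eq:PhiN}, the potential $\varphi$ enters only through the term $|\alpha|\varphi$ with $|\alpha|\le 1$, so $|\Phi_N[\varphi_1]-\Phi_N[\varphi_2]|\le \|\varphi_1-\varphi_2\|_{\infty}$ pointwise on $X_N$. Combined with the assumptions $[\varphi_1]=[\varphi_2]$ and $[\psi_1]=[\psi_2]$, this gives $U_N,V_N\in L^{\infty}(X_N)$ with bounds independent of $N$, and all four lifted potentials have small unbounded loci. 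The BEGZ integration by parts then applies on $X_N$ with the closed positive $(N+n-1,N+n-1)$ test current
\[
T_N:=\theta_{N,\Phi_N[\gamma_1]}\wedge\cdots\wedge\theta_{N,\Phi_N[\gamma_{n-1}]}\wedge\theta_N^N,
\]
yielding $\int_{X_N} U_N\,\ddc V_N\wedge T_N=\int_{X_N} V_N\,\ddc U_N\wedge T_N$ for every $N\ge 1$.

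The remaining task is to show that, after the correct binomial normalization $c_N$ of the order $\binom{N+n-1}{n-1}^{-1}$ suggested by Theorem~\ref{thm:WN}, both sides of this symmetric identity converge as $N\to\infty$ to the corresponding integrals in \eqref{eq:ibp1}. To this end I derive a mixed version of Theorem~\ref{thm:WN} computing $\pi^N_{1*}(T_N\wedge\theta_{N,\Phi_N[\psi_j]})$ as a beta-function-weighted integral over $t\in[0,1]$ of the currents $\theta_{\gamma_1}\wedge\cdots\wedge\theta_{\gamma_{n-1}}\wedge\theta_{(1-t)\eta+t\psi_j}$ on $X$. As $N\to\infty$ only the $t=1$ endpoint contributes after dividing by $c_N$, which gives total-variation convergence to $\theta_{\gamma_1}\wedge\cdots\wedge\theta_{\gamma_{n-1}}\wedge\theta_{\psi_j}$, and Theorems~\ref{thm:weak1}--\ref{thm:weak2} then let the bounded factors $U_N,V_N$ pass through the limit. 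Corollary~\ref{cor:intrestri} is presumably the exact restriction formula at each finite $N$ that results from this computation.

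The main obstacle is precisely this limit passage, because $U_N$ and $V_N$ are not pullbacks from $X$, so the sense in which they concentrate onto $u$ and $v$ must be analysed carefully. Using the explicit formula \eqref{eq:PhiN1} one has $U_N=g_N\circ\hat{\alpha}[\varphi_2]-g_N\circ\hat{\alpha}[\varphi_1]$ on the generic set, and the concentration is governed by the asymptotics of the auxiliary function $g_N$ as $N\to\infty$. Showing that $c_N\,\pi^N_{1*}(U_N\,\ddc V_N\wedge T_N)$ converges to $u\,\ddc v\wedge\theta_{\gamma_1}\wedge\cdots\wedge\theta_{\gamma_{n-1}}$ then combines the mixed pushforward formula with a quasi-continuity argument in the spirit of Theorem~\ref{thm:weak2}, and this is where the real work of the proof will lie.
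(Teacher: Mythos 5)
Your overall philosophy (lift to $X_N$, apply the BEGZ formula where everything has small unbounded locus, descend as $N\to\infty$) is indeed the paper's philosophy, but the step you defer to "where the real work of the proof will lie" is precisely the step the paper is organized to avoid, and as formulated your version of it has no proof available. Your limit passage requires a mixed analogue of Theorem~\ref{thm:WN} computing $\pi^N_{1*}\bigl(\theta_{N,\Phi_N[\gamma_1]}\wedge\cdots\wedge\theta_{N,\Phi_N[\gamma_{n-1}]}\wedge\theta_N^N\wedge\theta_{N,\Phi_N[\psi_j]}\bigr)$. No such formula is proved in the paper or in \cite{WN17}, and it is not a routine extension: Witt Nystr\"om's computation rests on the fact that the full Monge--Amp\`ere measure of a \emph{single} $\Phi_N[\varphi]$ is supported on $V_N=\hat{\alpha}[\varphi]^{-1}\mathring{\Sigma}_N$, where $\Phi_N[\varphi]$ has the explicit Legendre-type form; for a product of $n$ different lifted potentials these sets differ and the fiberwise change of variables breaks down. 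Two further problems: $\theta_N^N$ is not a positive current in general ($\theta$ is only a smooth representative of a big class), so your $T_N$ is not obviously admissible in BEGZ Theorem~1.14; and $U_N$ does not converge to $\pi_1^{N*}u$ pointwise on the relevant support --- on $V_N$ one has $U_N=\hat{\alpha}[\varphi_1]^2-\hat{\alpha}[\varphi_2]^2=\tfrac{u}{2}(|\hat{\alpha}[\varphi_1]|+|\hat{\alpha}[\varphi_2]|)$, so the factor $|\hat{\alpha}|$ must be absorbed into the $t$-variable of the pushforward, which again requires the explicit fiber integration you have not supplied.

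The paper circumvents all of this by never lifting $u$ at all and never forming genuinely mixed products of distinct $\Phi_N$'s. It first polarizes to $\gamma_1=\cdots=\gamma_{n-1}=\gamma$, then realizes $\ddc v\wedge\theta_\gamma^{n-1}$ as the derivative at $a=0$ of $\tfrac1n\bigl((a\theta_{\psi_1}+b\theta_\gamma)^n-(a\theta_{\psi_2}+b\theta_\gamma)^n\bigr)$, so that only full Monge--Amp\`ere measures of the single potentials $\Phi_N[a\psi_j+b\gamma]$ enter (covered by Theorem~\ref{thm:WN}), with $u$ kept as the pullback $\pi_1^{N*}u$. After integrating by parts on $X_N$ (justified by BEGZ when $\varphi_1$ has small unbounded locus, giving Theorem~\ref{thm:intbyparts3}, and then by Theorem~\ref{thm:intbyparts3} itself in Lemma~\ref{lma:intrestri}), the only pushforward that must be computed is $\pi_1^{N*}\theta_{\varphi}\wedge\theta_{N,\Phi_N[\gamma']}^{N+n-1}$ with a \emph{single} lifted potential, and this is exactly the content of the technical Lemma~\ref{lma:localesti1} (support on $V_N$, fiber integration, expansion of $A^N[a,b]$ via Proposition~\ref{prop:anstudy}). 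Finally two auxiliary hypotheses ($[\gamma]=[\psi_2]$ and the small unbounded locus of $\varphi_1$) are removed by polynomial identities in $a$. Note also that Corollary~\ref{cor:intrestri} involves the Dini derivative $[\cdot]_1$ in the auxiliary parameter $a$, so it is not the finite-$N$ identity your scheme would produce. To complete your argument you would either have to prove the mixed pushforward formula from scratch or restructure along the paper's derivative-in-$a$ device.
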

\begin{remark}
Here 
\[
\ddc v \wedge \theta_{\gamma_1}\wedge \cdots\wedge \cdots\wedge \theta_{\gamma_{n-1}}:=\theta_{\psi_1}\wedge \theta_{\gamma_1}\wedge \cdots\wedge \cdots \wedge \theta_{\gamma_{n-1}}-\theta_{\psi_2}\wedge \theta_{\gamma_1}\wedge \cdots\wedge \cdots \wedge \theta_{\gamma_{n-1}}
\]
is understood as a signed measure. This definition is independent of the choice of $\theta_{\psi_1}$ and $\theta_{\psi_2}$ by \cite{BEGZ10} Proposition~1.4.
Other similar expressions are understood in the same way.
\end{remark}

Note that by polarization, we may assume that $\gamma_1=\cdots=\gamma_{n-1}=\gamma$. We want to show
\begin{equation}\label{eq:ibp2}
\int_X u\,\ddc v \wedge \theta_{\gamma}^{n-1}=\int_X v\,\ddc u \wedge \theta_{\gamma}^{n-1}.
\end{equation}

Let us fix several notations. We introduce two variables $a,b\in [0,1]$ with $b=1-a$. For an expression $f(a,b)$, we write
\[
[f(a,b)]_1=\partial_a|_{a=0}f(a,1-a)
\]
when the derivative in $a$ is the right upper derivative (i.e. Dini derivative).

Let $W\subseteq X-Z$ be an open subset.
Let $\psi_1,\psi_2,\gamma$ be $\theta$-psh functions on $W$.
For each $N\geq 1$, define
\[
A^N[a,b]:=\Phi_N[a\psi_1+b\gamma]-\Phi_N[a\psi_2+b\gamma].
\]
We do not mention $\psi_1,\psi_2,\gamma,W$ in the notation explicitly, but they will always be clear from the context.

\begin{proposition}\label{prop:anstudy} Let $W\subseteq X-Z$ be an open subset.
Let $\psi_1,\psi_2,\gamma$ be $\theta$-psh functions on $W$. Assume that 
\[
v:=\psi_1-\psi_2\in L^{\infty}_{\loc}(W),\quad \gamma\leq \psi_1,\quad \gamma\leq \psi_2.
\]
\begin{enumerate}
    \item 
    On $W\times \mathbb{C}^N$,
    \[
        A^N[a,b]=-g_N \circ \hat{\alpha}[a\psi_1+b\gamma]+g_N \circ \hat{\alpha}[a\psi_2+b\gamma].
    \]
    \item For $a\in [0,1)$, $1-a>\epsilon>0$, on $\hat{\alpha}^{-1}(\mathbb{R}^N)$, we have
    \[
    A^N[a+\epsilon,b-\epsilon]-A^N[a,b]=\frac{\epsilon}{2} (\gamma-\psi_1)L\circ \hat{\alpha}[a\psi_1+b\gamma]-\frac{\epsilon}{2} (\gamma-\psi_2)L\circ \hat{\alpha}[a\psi_2+b\gamma]+\mathcal{O}(\epsilon^2),
    \]
    where $L:\mathbb{R}^N\rightarrow \mathbb{R}$ is the piecewise linear bounded function defined in Appendix~\ref{sec:quad}. The $\mathcal{O}$-constant depends only on $N$.
    \item For $a\in [0,1]$, on $\hat{\alpha}^{-1}(\mathbb{R}^N)$, 
    \begin{equation}\label{eq:2}
    A^N[a,b]=-\frac{va}{2}L\circ \hat{\alpha}[\gamma]+\mathcal{O}(a^2),    
    \end{equation}
    where the $\mathcal{O}$-constant depends only on $N$. 
    \item 
    \[
    \left|A^N[a,b]\right|\leq a|v|.
    \]
\end{enumerate}
\end{proposition}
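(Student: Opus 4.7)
The plan is to prove the four claims in the order $(1), (4), (2), (3)$. Claim $(1)$ is immediate from Lemma~\ref{lma:phin}: applying the identity $\Phi_N[\varphi]=C_0\log|Z_0|^2_{\omega_N}+\eta-g_N\circ\hat{\alpha}[\varphi]$ to $\varphi=a\psi_1+b\gamma$ and $\varphi=a\psi_2+b\gamma$ on $W\times\mathbb{C}^N$ and subtracting, the common summand cancels and the claimed formula drops out. Claim $(4)$ follows from the supremum formula \eqref{eq:PhiN}: each integrand inside the supremum is affine in $\varphi$ with slope $|\alpha|\in[0,1]$, so $\Phi_N$ is pointwise $1$-Lipschitz in $\varphi$, and since $(a\psi_1+b\gamma)-(a\psi_2+b\gamma)=av$ we obtain $|A^N[a,b]|\leq a|v|$.

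For claim $(2)$, the crucial observation is the affine identity
\[
\hat{\alpha}[a\psi_j+b\gamma]=\hat{\alpha}[\gamma]+\tfrac{a}{2}(\psi_j-\gamma)\,\mathbf{1},\qquad j=1,2,
\]
where $\mathbf{1}\in\mathbb{R}^N$ is the all-ones vector; this follows directly from \eqref{eq:alphaa} together with $b=1-a$. Passing from $(a,b)$ to $(a+\epsilon,b-\epsilon)$ therefore shifts each $\hat{\alpha}[a\psi_j+b\gamma]$ by exactly $\tfrac{\epsilon}{2}(\psi_j-\gamma)\,\mathbf{1}$. A first-order Taylor expansion of $g_N$ along the direction $\mathbf{1}$, using the defining property of $L$ from Appendix~\ref{sec:quad} (the directional derivative of $g_N$ along $\mathbf{1}$) together with the fact that $g_N$ is piecewise quadratic along this direction with coefficients controlled only by $N$, yields
\[
g_N\circ\hat{\alpha}[(a+\epsilon)\psi_j+(b-\epsilon)\gamma]-g_N\circ\hat{\alpha}[a\psi_j+b\gamma]=\tfrac{\epsilon}{2}(\psi_j-\gamma)\,L\circ\hat{\alpha}[a\psi_j+b\gamma]+\mathcal{O}(\epsilon^2).
\]
Substituting into $(1)$ and taking the difference ($j=2$ minus $j=1$) produces the formula in $(2)$.

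Claim $(3)$ is the specialization of $(2)$ to the initial point $a=0$, $b=1$: at $a=0$ both potentials collapse to $\gamma$, so $A^N[0,1]=0$ and $\hat{\alpha}[a\psi_j+b\gamma]|_{a=0}=\hat{\alpha}[\gamma]$ for both $j$; the surviving linear term $\tfrac{a}{2}\bigl[(\gamma-\psi_1)-(\gamma-\psi_2)\bigr]\,L\circ\hat{\alpha}[\gamma]=-\tfrac{av}{2}\,L\circ\hat{\alpha}[\gamma]$ is exactly \eqref{eq:2}. The main obstacle is the bookkeeping of the $\mathcal{O}$-constants in $(2)$ and $(3)$ so that they depend only on $N$; this is precisely why the quantitative behavior of $g_N$ (boundedness of $L$, second-order Taylor remainder along the $\mathbf{1}$-direction) is isolated in Appendix~\ref{sec:quad}, after which the four claims reduce to a short linear-algebraic computation.
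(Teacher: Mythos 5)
Your proof is correct and follows essentially the same route as the paper: (1) from Lemma~\ref{lma:phin}, (2)--(3) from the affine shift of $\hat{\alpha}$ along $e=(1,\ldots,1)$ combined with the expansion of $g_N$ in Proposition~\ref{prop:second}, and (4) from the pointwise $1$-Lipschitz dependence of $\Phi_N[\varphi]$ on $\varphi$. The only point worth stating explicitly (as the paper does) is that the hypothesis $\gamma\leq\psi_j$ makes the shift a \emph{non-negative} multiple of $e$, which is needed because Proposition~\ref{prop:second} is one-sided, $L$ being a right (Dini) derivative as $t\to 0+$.
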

\begin{proof}
(1) This follows from Lemma~\ref{lma:phin}.

(2) Observe that
\[
\hat{\alpha}[a\psi_1+b\gamma]-\hat{\alpha}[(a+\epsilon)\psi_1+(b-\epsilon)\gamma]=\frac{\epsilon}{2}(\gamma-\psi_1)e,
\]
where $e=(1,\ldots,1)$. By assumption, $\gamma-\psi_1\leq 0$,
so (2) follows from Proposition~\ref{prop:second}.

(3) Note that \eqref{eq:2} is a special case of (2).

(4) This follows directly from definition.
\end{proof}

\begin{corollary}\label{cor:capconv}
Let $\psi_1,\psi_2,\gamma\in \PSH(X,\theta)$. Assume that 
\[
 [\psi_1]=[\psi_2],\quad \gamma\leq \psi_1,\quad \gamma\leq \psi_2.
\]
As $a\to 0+$, $A^N[a,b]$ converges to $0$ in capacity.
\end{corollary}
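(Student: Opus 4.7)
The plan is to derive the corollary directly from the uniform estimate in Proposition~\ref{prop:anstudy}(4), using that $[\psi_1] = [\psi_2]$ forces $v = \psi_1 - \psi_2$ to be globally bounded and that the excluded sets $Z$ and $H$ are pluripolar.

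First I would fix $C > 0$ such that $|\psi_1 - \psi_2| \leq C$ wherever both potentials are finite; this is exactly the meaning of $[\psi_1] = [\psi_2]$, and outside the (pluripolar) polar set of $\psi_1 + \psi_2$ it gives $|v| \leq C$. Then I would apply Proposition~\ref{prop:anstudy} on the open subset $W := X - Z$. The hypotheses of that proposition are all in place: the previous step yields $v \in L^\infty(W) \subseteq L^\infty_{\loc}(W)$, and the comparison $\gamma \leq \psi_j$ is assumed in the corollary. Part (4) then produces the pointwise estimate
\[
|A^N[a,b]| \leq a\,|v| \leq a C
\]
off a pluripolar subset of $(X - Z) \times \mathbb{P}^N$. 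Since $Z$ is a proper analytic subset of $X$ by \cite{Bou04}, the set $Z \times \mathbb{P}^N$ is pluripolar in $X_N$, and therefore the bound $|A^N[a,b]| \leq aC$ holds quasi-everywhere on all of $X_N$.

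To conclude, I would fix $\delta > 0$ and observe that for every $a \in (0, \delta/C)$ the level set $\{|A^N[a,b]| > \delta\}$ is contained in a pluripolar subset of $X_N$, hence has zero Monge--Amp\`ere capacity. This gives $A^N[a,b] \to 0$ in capacity as $a \to 0+$. I do not expect any serious obstacle here: the whole argument is the observation that the global boundedness of $v$ coming from $[\psi_1] = [\psi_2]$ upgrades the Lipschitz-in-$a$ estimate of Proposition~\ref{prop:anstudy}(4) into a uniform one, and that the only places where this fails are pluripolar (inside $Z$, inside $H$, or on the common pole set of $\psi_1, \psi_2$), which are invisible to capacity.
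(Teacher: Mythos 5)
Your proof is correct and is essentially the paper's own argument: both reduce the claim to the pointwise bound $\left|A^N[a,b]\right|\le a|v|$ from Proposition~\ref{prop:anstudy} together with a capacity estimate for the set where $a|v|$ exceeds the threshold. The only (harmless) difference is that you use part (4) and the quasi-everywhere bound $|v|\le C$ coming from $[\psi_1]=[\psi_2]$, so that the exceptional set is pluripolar and hence of zero capacity, whereas the paper routes through the expansion in part (3), the bound $|L|\le C_1$, and the general fact that $\Capa\{|v|>M\}\to 0$ as $M\to\infty$ for a difference of quasi-psh functions; both steps are valid.
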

\begin{proof}
Let $v=\psi_1-\psi_2$.

We need to show that for each $\epsilon>0$,
\[
\lim_{a\to 0+}\Capa \left\{\left|A^N[a,b]\right|>\epsilon\right\}=0.
\]
By Proposition~\ref{prop:anstudy}, we can take $C=C(N)$ such that
\[
\left|A^N[a,b]+\frac{va}{2}L\circ \hat{\alpha}[\gamma]\right|\leq Ca^2.
\]
Take $a$ small enough, we can thus assume that $Ca^2<\epsilon/2$, then
\[
\left\{\left|A^N[a,b]>\epsilon\right|\right\}\subseteq \left\{\left|\frac{va}{2}L\circ \hat{\alpha}[\gamma]\right|>\frac{\epsilon}{2} \right\}.
\]
Take a constant $C_1$ so that $|L|\leq C_1$, then
\[
\left\{\left|A^N[a,b]>\epsilon\right|\right\}\subseteq\left\{ a|v|> \frac{\epsilon}{C_1}\right\}.
\]
But since $v$ is the difference of two $\theta$-psh functions,
\[
\lim_{a\to 0+}\Capa\left\{|v|>\frac{\epsilon}{C_1 a}\right\}=0.
\]
Here the capacity is still the capacity on $X_N$ instead of on $X$, we have omitted the pull-back notations.
\end{proof}

\begin{theorem}\label{thm:intbyparts3}
Let $\gamma_1,\ldots,\gamma_{n-1},\varphi_j,\psi_j\in \PSH(X,\theta)$ ($j=1,2$). Let $u=\varphi_1-\varphi_2$, $v=\psi_1-\psi_2$. Assume that 
\[
[\varphi_1]=[\varphi_2],\quad [\psi_1]=[\psi_2].
\]
Assume furthermore that $\varphi_1$ has small unbounded locus.

Then
\begin{equation}\label{eq:ibp3}
\int_X u\,\ddc v \wedge \theta_{\gamma_1}\wedge \cdots\wedge \cdots \wedge   \theta_{\gamma_{n-1}}= \int_X v\,\ddc u \wedge \theta_{\gamma_1}\wedge \cdots\wedge \cdots \wedge\theta_{\gamma_{n-1}}.
\end{equation}
\end{theorem}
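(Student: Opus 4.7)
The plan is to lift the problem to $X_N = X \times \mathbb{P}^N$ via the Witt Nyström construction, where all relevant potentials become small-unbounded-locus so that \cite{BEGZ10} Theorem~1.14 applies directly, and then descend by two successive limits: first $a \to 0^+$ in the auxiliary parameter of Proposition~\ref{prop:anstudy}, and then $N \to \infty$. Proposition~\ref{prop:anstudy} and Corollary~\ref{cor:capconv} are engineered to effect precisely the first of these limits.

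By polarization, I reduce \eqref{eq:ibp3} to the case $\gamma_1 = \cdots = \gamma_{n-1} = \gamma$, and by replacing $\gamma$ by $\gamma - C$ for $C$ large (which leaves $\theta_\gamma$ unchanged) I arrange $\gamma \leq \psi_1,\psi_2$, putting us inside the hypotheses of Proposition~\ref{prop:anstudy}. Since $[\varphi_1] = [\varphi_2]$ and $\varphi_1$ has small unbounded locus, so does $\varphi_2$, and $U := \pi_1^{N*} u$ is a bounded difference of potentials with small unbounded loci on $X_N$. For each $a \in (0, 1]$ with $b = 1 - a$, Proposition~\ref{prop:anstudy}(4) gives that $A^N[a, b]$ is bounded, while the lower bound \eqref{eq:lo} ensures both $\Phi_N[a\psi_j + b\gamma]$ have small unbounded loci. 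I can then apply \cite{BEGZ10} Theorem~1.14 on $X_N$ against $\Theta := \theta_{N, \Phi_N[\gamma]}^{n + N - 1}$ to obtain
\[
\int_{X_N} U \, \ddc A^N[a, b] \wedge \Theta \;=\; \int_{X_N} A^N[a, b] \, \ddc U \wedge \Theta.
\]

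Next I divide by $a$ and send $a \to 0^+$. By Proposition~\ref{prop:anstudy}(3) and Corollary~\ref{cor:capconv}, $A^N[a, b]/a$ is uniformly bounded and converges in capacity to $-\tfrac12 v \cdot L \circ \hat\alpha[\gamma]$, so Theorem~\ref{thm:weak2} controls the right-hand side against the $a$-independent current $\ddc U \wedge \Theta$. For the left-hand side, I expand $\ddc A^N[a, b] = \theta_{N, \Phi_N[a\psi_1 + b\gamma]} - \theta_{N, \Phi_N[a\psi_2 + b\gamma]}$ and use \eqref{eq:PhiN1} together with the regularity of $g_N$ to identify the weak limit of $a^{-1} \ddc A^N[a, b] \wedge \Theta$. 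Finally I push both sides forward by $\pi_1^N$, normalize by the combinatorial factor dictated by a polarization of Theorem~\ref{thm:WN}, and let $N \to \infty$; the total-variation convergence \eqref{eq:convmeasure} (appropriately polarized to mixed products) identifies the limits with the two sides of \eqref{eq:ibp3}.

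The technical heart is the limit $a \to 0^+$ of the left-hand side, where the weak limit of $a^{-1} \ddc A^N[a, b] \wedge \Theta$ has to be identified in terms of $\ddc v$ on $X$; this requires both the explicit envelope formula \eqref{eq:PhiN1} for $\Phi_N$ and the polarization of Theorem~\ref{thm:WN} for mixed products, together with careful combinatorial bookkeeping as $N \to \infty$. A secondary subtle point is checking that $\pi^N_{1*}\!\bigl(L \circ \hat\alpha[\gamma] \cdot \theta_{N, \Phi_N[\gamma]}^{n + N - 1}\bigr)$, once normalized, converges to a multiple of $\theta_\gamma^{n-1}$, so that the $L\circ\hat\alpha[\gamma]$ weight produced by Proposition~\ref{prop:anstudy}(3) pairs with $\ddc u$ correctly to reconstruct the right-hand side of \eqref{eq:ibp3}.
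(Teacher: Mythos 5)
Your overall architecture --- lift to $X_N$ via $\Phi_N$, apply \cite{BEGZ10} Theorem~1.14 there, then take $a\to 0^+$ followed by $N\to\infty$ --- is indeed the paper's strategy, but one preliminary reduction is wrong and the two identifications that carry essentially all of the content are not supplied. The preliminary point: you cannot arrange $\gamma\leq\psi_1,\psi_2$ by replacing $\gamma$ with $\gamma-C$, because $\gamma-\psi_j$ is in general unbounded above (e.g.\ when $\psi_j$ is more singular than $\gamma$ somewhere). The paper first proves the statement under the extra hypothesis $[\gamma]=[\psi_2]$, where such a normalization is legitimate, and then removes it by replacing $\gamma$ with $a\psi_2+b\gamma$ and using that both sides of \eqref{eq:ibp3} are polynomials in $a$ (Step~2 of the proof of Theorem~\ref{thm:intbypart1}); some such device is needed in your argument as well.

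More seriously, your treatment of the left-hand side rests on a ``polarization of Theorem~\ref{thm:WN}'' that does not exist. Once you fix $\Theta=\theta_{N,\Phi_N[\gamma]}^{N+n-1}$ from the outset, $a^{-1}\ddc A^N[a,b]\wedge\Theta$ is a difference of \emph{mixed} products $\theta_{N,\Phi_N[a\psi_j+b\gamma]}\wedge\theta_{N,\Phi_N[\gamma]}^{N+n-1}$, and neither Theorem~\ref{thm:WN} nor \eqref{eq:convmeasure} computes the push-forward $\pi^N_{1*}$ of such mixed products; they apply only to the full power $\theta_{N,\Phi_N[\varphi]}^{N+n}$ of a single potential, and proving a mixed analogue is a substantial task in its own right. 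The paper avoids this entirely: it starts from the difference of full powers $\theta_{N,\Phi_N[a\psi_1+b\gamma]}^{N+n}-\theta_{N,\Phi_N[a\psi_2+b\gamma]}^{N+n}$ (where Theorem~\ref{thm:WN} applies verbatim), telescopes it as $\ddc A^N[a,b]\wedge\sum_r(\cdots)$ as in \eqref{eq:intbypartstemp}, integrates by parts via \cite{BEGZ10} Theorem~1.14 (this is where the hypothesis on $\varphi_1$ enters), and then shows that each term $J_r$ has the same first-order behaviour in $a$ as the $\Theta$-term, using monotonicity of $J_r$ in $r$ (partial comparison), Theorem~\ref{thm:weak2} and Corollary~\ref{cor:capconv} for \eqref{eq:a1}, and the explicit first-order expansion of Lemma~\ref{lma:localesti1}. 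What you call the ``technical heart'' is therefore genuinely missing rather than deferred. The same applies to your ``secondary subtle point'': identifying the normalized $N\to\infty$ limit of $\int_{X_N} vL\circ\hat{\alpha}[\gamma]\,\ddc u\wedge\Theta$ with $\int_X v\,\ddc u\wedge\theta_\gamma^{n-1}$ is exactly the content of Lemmas~\ref{lma:localesti1} and~\ref{lma:localesti} (the support claim for $V_N$, the $C^{1,1}$ regularization, the push-forward to $\Sigma_N$ and the beta-function evaluation), without which neither side of \eqref{eq:ibp3} is actually recovered from the $X_N$-integrals.
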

We postpone the proof of this theorem and see how this theorem will imply Theorem~\ref{thm:intbypart1}.

We need several other lemmata.
\begin{lemma}\label{lma:intrestri}
Let $\gamma, \varphi_j,\psi_j\in \PSH(X,\theta)$ ($j=1,2$). Let $u=\varphi_1-\varphi_2$, $v=\psi_1-\psi_2$. Assume that
\[
[\varphi_1]=[\varphi_2],\quad [\psi_1]=[\psi_2]= [\gamma].
\]
Moreover, assume that 
\[
\gamma\leq \psi_2\leq \psi_1.
\]

Then
\begin{equation}\label{eq:intres}
\int_X u\,\ddc v\wedge \theta_{\gamma}^{n-1}=\lim_{N\to\infty} 
\frac{(n-1)!}{N^{n-1}}\left[\int_{X_N}A^N[a,b]\,\pi^{N*}_{1}\ddc u \wedge \theta_{N,\Phi_N[\gamma]}^{N+n-1} \right]_1.
\end{equation}
\end{lemma}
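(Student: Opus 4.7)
The plan is to perform integration by parts on $X_N$, push the result down to $X$, and then take the Dini derivative in $a$ and the limit $N\to\infty$. The virtue of passing to $X_N$ is that by \eqref{eq:lo} every potential in sight has small unbounded locus, so the classical Bedford--Taylor integration by parts applies.

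First I would apply the small-unbounded-locus integration by parts (BEGZ Theorem~1.14, as recalled above) on $X_N$. This is legitimate because $\pi_1^{N*}u$ is bounded (since $[\varphi_1]=[\varphi_2]$ makes $u$ bounded on $X$), $A^N[a,b]$ is bounded on $X_N$ by Proposition~\ref{prop:anstudy}(4) (using that $v$ is bounded), and the potentials $\Phi_N[\gamma]$ and $\Phi_N[a\psi_j+b\gamma]$ all have small unbounded locus. It gives
\[
\int_{X_N} A^N[a,b]\,\pi_1^{N*}\ddc u \wedge \theta_{N,\Phi_N[\gamma]}^{N+n-1}=\int_{X_N}\pi_1^{N*}u\cdot \ddc A^N[a,b]\wedge \theta_{N,\Phi_N[\gamma]}^{N+n-1}.
\]
Since $\ddc A^N[a,b]=\theta_{N,\Phi_N[a\psi_1+b\gamma]}-\theta_{N,\Phi_N[a\psi_2+b\gamma]}$ as a signed measure and $u$ depends only on the $X$ coordinate, the projection formula rewrites this as $\int_X u\cdot \mu_N(a)$ with
\[
\mu_N(a):=\pi_{1*}^N\bigl[(\theta_{N,\Phi_N[a\psi_1+b\gamma]}-\theta_{N,\Phi_N[a\psi_2+b\gamma]})\wedge \theta_{N,\Phi_N[\gamma]}^{N+n-1}\bigr].
\]

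Next I would analyse $[\mu_N(a)]_1$ asymptotically via a mixed-product version of Theorem~\ref{thm:WN}. The natural candidate, obtained by differentiating Theorem~\ref{thm:WN} along a one-parameter family of potentials, expresses $\mu_N(a)$ (or its Dini derivative at $a=0$) as an integral in $t\in[0,1]$ with weight $t^{N-1}$ (respectively $t^N$) of a current built from $\ddc(\psi_1-\psi_2)$ and $\theta_{(1-t)\eta+t\gamma}^{n-1}$, up to combinatorial constants. The Laplace-type concentration of $\int_0^1 t^N(\cdots)\,\mathrm{d}t$ at $t=1$, together with the normalization $(n-1)!/N^{n-1}$ and the binomial factors, then yields convergence in total variation to $\ddc v\wedge \theta_\gamma^{n-1}$, so that pairing with the bounded function $u$ produces the LHS $\int_X u\,\ddc v\wedge \theta_\gamma^{n-1}$.

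The main obstacle is this last step: establishing the mixed-product Witt Nystr\"om formula and rigorously passing to the limit. Polarization is unavailable because $\Phi_N$ is nonlinear in the potential, so one must either repeat the WN17 computation directly in the mixed setting, or first approximate $\psi_j,\gamma$ by potentials with small unbounded locus on $X$ and propagate the approximation through the construction using Theorems~\ref{thm:weak1} and \ref{thm:weak2}. A secondary technical point is the interchange of the Dini derivative with the integral, which should follow from the exact expansion in Proposition~\ref{prop:anstudy}(2) combined with the uniform bound $|A^N[a,b]|\leq a|v|$ from part~(4) via dominated convergence.
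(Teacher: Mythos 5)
Your first step contains a genuine gap. You justify the integration by parts on $X_N$ by appealing to the BEGZ small-unbounded-locus theorem on the grounds that ``every potential in sight has small unbounded locus''. That is not true: the estimate \eqref{eq:lo} only gives small unbounded loci for the Witt Nystr\"om potentials $\Phi_N[\cdot]$, whereas the potentials defining $\pi_1^{N*}u$, namely $\pi_1^{N*}\varphi_1$ and $\pi_1^{N*}\varphi_2$, are pullbacks of completely general $\theta$-psh functions --- the lemma makes no small-unbounded-locus hypothesis on $\varphi_1,\varphi_2$, and removing that hypothesis is precisely the point of the paper. Since \cite{BEGZ10} Theorem~1.14 requires \emph{all four} potentials to have small unbounded loci, it does not apply to swap $\ddc$ between $A^N[a,b]$ and $\pi_1^{N*}u$. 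The paper closes this gap with a bootstrap: it first proves Theorem~\ref{thm:intbyparts3} (integration by parts when only \emph{one} of the two differences comes from small-unbounded-locus potentials), by rerunning the whole argument in a setting where BEGZ~1.14 genuinely applies on $X_N$, and then invokes Theorem~\ref{thm:intbyparts3} on $X_N$ --- with $\Phi_N[a\psi_j+b\gamma]$ playing the role of the small-unbounded-locus side --- to justify the integration by parts in the present lemma. Without this intermediate theorem your argument is circular or unfounded at its very first step.

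The second half of your proposal is closer to the paper's route but is left as an acknowledged ``main obstacle'' rather than carried out. The paper does not differentiate Theorem~\ref{thm:WN} along a family; instead, after expanding $\theta^{N+n}_{N,\Phi_N[a\psi_1+b\gamma]}-\theta^{N+n}_{N,\Phi_N[a\psi_2+b\gamma]}$ into the telescoping sum over $r$ of mixed products, it (i) shows the terms $J_r[a,b]$ are monotone in $r$ via the partial comparison theorem of \cite{DDNL18c}, so only the extreme values of $r$ matter, (ii) proves via Lemma~\ref{lma:localesti1} that replacing $\theta_{N,\Phi_N[a\psi_2+b\gamma]}$ by $\theta_{N,\Phi_N[\gamma]}$ changes the integral only by $\mathcal{O}(a^2)$, hence does not affect $[\cdot]_1$, and (iii) computes the resulting coefficient of $a$ explicitly by the fibre-wise pushforward computation of \cite{WN17} (this is the content of Lemmas~\ref{lma:localesti1} and \ref{lma:localesti}, which also legitimize exchanging $[\cdot]_1$ with the integral and with $\lim_N$, since the relevant integrals are polynomials in $a$ of bounded degree). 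These are the substantive ingredients your sketch defers, so as written the proposal does not constitute a proof.
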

\begin{proof}
By Lemma~\ref{lma:localesti}, the limit on the RHS of \eqref{eq:intres} exists.

Note that 
\[
\Phi[a\psi_1+b\gamma]\geq \Phi[a\psi_2+b\gamma],
\]
so $A^N[a,b]\geq 0$. 

Define 
\[
I=\int_X u\,\ddc v\wedge \theta_{\gamma}^{n-1}.
\]
Then 
\[
I=\frac{1}{n}\left[\int_X u\left((a\theta_{\psi_1}+b\theta_{\gamma})^n-(a\theta_{\psi_2}+b\theta_{\gamma})^n \right)\right]_{1}.
\]
By Theorem~\ref{thm:WN},
\[
I=\frac{1}{n}\lim_{N\to\infty}\binom{N+n}{n}^{-1}\left[\int_{X_N}\pi^{N*}_{1}u \left( \theta_{N,\Phi_N[a\psi_1+b\gamma]}^{N+n}-\theta_{N,\Phi_N[a\psi_2+b\gamma]}^{N+n} \right)\right]_1.
\]
Here we have made use of the fact that the integral on RHS is polynomial in $a$ and $b$ of bounded degree to change the order of limit and $[\cdot]_1$.
Then
\begin{equation}\label{eq:intbypartstemp}
\begin{split}
nI
=&\lim_{N\to\infty}\binom{N+n}{n}^{-1}\left[\int_{X_N}\pi^{N*}_{1}u \,\ddc A^N[a,b]\wedge \sum_{r=0}^{N+n-1}\left(\theta_{N,\Phi_N[a\psi_1+b\gamma]}^{r}\wedge \theta_{N,\Phi_N[a\psi_2+b\gamma]}^{N+n-1-r}\right)\right]_1\\
=&\lim_{N\to\infty}\binom{N+n}{n}^{-1}\left[\int_{X_N} A^N[a,b]\,\ddc\pi^{N*}_{1}u\wedge\sum_{r=0}^{N+n-1}\left(\theta_{N,\Phi_N[a\psi_1+b\gamma]}^{r}\wedge \theta_{N,\Phi_N[a\psi_2+b\gamma]}^{N+n-1-r}\right)\right]_1,
\end{split}
\end{equation}
where on the second line, we perform the integration by parts. This is allowed by our assumption and by Theorem~\ref{thm:intbyparts3}.

For $r=0,\ldots,N+n-1$, define
\[
J_r[a,b]:=\int_{X_N} A^N[a,b]\,\pi^{N*}_{1}\theta_{\varphi_1}\wedge\theta_{N,\Phi_N[a\psi_1+b\gamma]}^{r}\wedge \theta_{N,\Phi_N[a\psi_2+b\gamma]}^{N+n-1-r}.
\]
Observe that $J_r$ is decreasing with respect to $r$. In fact,
\[
J_r[a,b]=\int_0^{\infty} \mathrm{d}t \int_{\{A^N[a,b]>t\}}\,\pi^{N*}_{1}\theta_{\varphi_1}\wedge\theta_{N,\Phi_N[a\psi_1+b\gamma]}^{r}\wedge \theta_{N,\Phi_N[a\psi_2+b\gamma]}^{N+n-1-r}.
\]
So it suffices to prove that the inner integral is decreasing with respect to $r$.
Then since $\Phi_N[a\psi_j+b\gamma]$ $(j=1,2)$ have the same singularity type, by \cite{DDNL18c} Proposition~3.1, we can apply the partial comparison theorem (\cite{DDNL18c} Proposition~3.5) to conclude.

We claim that
\[
J_r[a,b]-\int_{X_N} A^N[a,b]\,\pi^{N*}_{1}\theta_{\varphi_1} \wedge \theta_{N,\Phi_N[\gamma]}^{N+n-1}=o(a),\quad a\to 0+.
\]

By monotonicity in $r$, it suffices to prove this for $r=0$ and $r=n+N-1$. Since the two cases are parallel, we can assume $r=0$. In fact, by Lemma~\ref{lma:localesti1},
\begin{equation}\label{eq:a2}
\int_{X_N} A^N[a,b]\,\pi^{N*}_{1}\theta_{\varphi_1}\wedge \theta_{N,\Phi_N[a\psi_2+b\gamma]}^{N+n-1}-\int_{X_N} A^N[a,b]\,\pi^{N*}_{1}\theta_{\varphi_1} \wedge \theta_{N,\Phi_N[\gamma]}^{N+n-1}=\mathcal{O}(a^2).
\end{equation}
So our claim holds. Hence
\[
\left[J_r[a,b]\right]_1=\left[\int_{X_N} A^N[a,b]\,\pi^{N*}_{1}\theta_{\varphi_1} \wedge \theta_{N,\Phi_N[\gamma]}^{N+n-1}\right]_1.
\]
The same argument holds with $\varphi_1$ replaced by $\varphi_2$, so \eqref{eq:intbypartstemp} implies that
\[
nI=\lim_{N\to\infty}\binom{N+n}{n}^{-1}(N+n)\left[\int_{X_N} A^N[a,b]\,\ddc\pi^{N*}_{1} u \wedge \theta_{N,\Phi_N[\gamma]}^{N+n-1}\right]_1
\]
and \eqref{eq:intres} follows.
\end{proof}

\begin{lemma}\label{lma:localesti1}
Let $W\subseteq X-Z$ be an open set. 
Let $\gamma,\psi,\varphi,\psi_j$ ($j=1,2$) be $\theta$-psh functions on $X$. 
Assume that
\[
0\leq \psi-\gamma\in L^{\infty}_{\loc}(W),\quad v:=\psi_1-\psi_2\in L^{\infty}_{\loc}(W).
\]

Take $\chi\in C^{\infty}_0(W)$, $\chi\geq 0$.

Define
\[
I_{W,N}[a,b]:=\int_{W\times \mathbb{C}^N}\chi A^N[a,b]\,\pi^{N*}_{1}\theta_{\varphi} \wedge \theta_{N,\Phi_N[a\psi+b\gamma]}^{N+n-1}.
\]

Then
\begin{equation}\label{eq:iwn}
\begin{split}
I_{W,N}[a,b]=a\binom{N+n-1}{n-1}N\int_0^1 t^N \int_W \chi v\,\pi^{N*}_{1}\theta_{\varphi} \wedge \left((1-t)\theta_{\eta}+t\theta_{\gamma}\right)^{n-1} \,\mathrm{d}t\\
+\mathcal{O}(a^2).
\end{split}
\end{equation}
\end{lemma}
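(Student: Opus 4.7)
The plan is to expand $A^N[a,b]$ to first order in $a$ using Proposition~\ref{prop:anstudy}(3), to replace the $a$-dependent Monge--Amp\`ere measure by its value at $\gamma$, and to identify the resulting leading integral via a differentiation of Witt Nystr\"om's formula (Theorem~\ref{thm:WN}).

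On the full-measure set $\hat{\alpha}^{-1}(\mathbb{R}^N)$, Proposition~\ref{prop:anstudy}(3) gives $A^N[a,b]=-\tfrac{av}{2}L\circ\hat{\alpha}[\gamma]+\mathcal{O}(a^2)$, with $\mathcal{O}$-constant depending only on $N$. Since $\chi\,\pi_1^{N*}\theta_\varphi\wedge\theta_{N,\Phi_N[a\psi+b\gamma]}^{N+n-1}$ is a closed positive current whose total mass is a fixed cohomological invariant, the $\mathcal{O}(a^2)$ remainder integrates to $\mathcal{O}(a^2)$. Next, using $\psi\geq\gamma$ together with $\psi-\gamma\in L^{\infty}_{\loc}(W)$, Proposition~\ref{prop:anstudy}(3) applied to the pair $(\psi,\gamma)$ yields $\Phi_N[a\psi+b\gamma]-\Phi_N[\gamma]=\mathcal{O}(a)$ locally uniformly on $W\times(\mathbb{P}^N-H)$, and the weak-convergence stability of non-pluripolar products (Theorems~\ref{thm:weak1} and~\ref{thm:weak2}) shows that replacing $\theta_{N,\Phi_N[a\psi+b\gamma]}^{N+n-1}$ by $\theta_{N,\Phi_N[\gamma]}^{N+n-1}$ introduces an $\mathcal{O}(a)$ error in the integrand; combined with the outer factor $a/2$ this is absorbed into $\mathcal{O}(a^2)$. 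The task reduces to proving
\[
-\frac{1}{2}\int_{W\times\mathbb{C}^N}\chi v\,L\circ\hat{\alpha}[\gamma]\,\pi_1^{N*}\theta_\varphi\wedge\theta_{N,\Phi_N[\gamma]}^{N+n-1}=\binom{N+n-1}{n-1}N\int_0^1 t^N\int_W\chi v\,\theta_\varphi\wedge\left((1-t)\theta_\eta+t\theta_\gamma\right)^{n-1}\mathrm{d}t.
\]

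I would derive this identity by differentiating Witt Nystr\"om's formula at $\epsilon=0$ along the affine family $\tilde\varphi(\epsilon)=\gamma+\epsilon h$ for a smooth test $h$ compactly supported in $W-Z$: by Lemma~\ref{lma:phin}, $\partial_\epsilon|_{\epsilon=0}\Phi_N[\gamma+\epsilon h]=-\tfrac{h}{2}L\circ\hat{\alpha}[\gamma]$, and matching the $\epsilon$-derivatives of the two sides of Witt Nystr\"om's formula produces a current-level identity
\[
\pi_{1*}^N\!\left(\ddc(hL\circ\hat{\alpha}[\gamma])\wedge\theta_{N,\Phi_N[\gamma]}^{N+n-1}\right)=-2\binom{N+n-1}{n-1}N\int_0^1 t^N\ddc h\wedge\theta_{(1-t)\eta+t\gamma}^{n-1}\,\mathrm{d}t.
\]
Pairing with an appropriate smooth test function on $X$ and applying integration by parts (valid in the Bedford--Taylor setting on $W\times(\mathbb{P}^N-H)$ because $L\circ\hat{\alpha}[\gamma]$ is bounded there and $\Phi_N[\gamma]$ has small unbounded locus) transfers the $\ddc$ off of $h$; a final approximation procedure, using Theorems~\ref{thm:weak1} and~\ref{thm:weak2} to replace $h$ by $\chi v$ and the smooth test potential by $\varphi$, yields the required identity.

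The main obstacle will be this last step: the differentiated Witt Nystr\"om identity naturally comes with $\ddc h$ on both sides, and the integration by parts and approximation needed to recover the form with $\theta_\varphi$ and $\chi v$ must be carried out carefully within the locally bounded Bedford--Taylor framework on $W\times(\mathbb{P}^N-H)$, so that all the errors introduced remain of size $\mathcal{O}(a^2)$ after multiplication by the outer factor $a$.
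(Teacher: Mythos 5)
Your overall strategy (expand $A^N[a,b]$ via Proposition~\ref{prop:anstudy}(3), then identify the coefficient of $a$) starts the same way as the paper, but it diverges at the two points where the real work lies, and both divergences hide genuine gaps.

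First, the replacement of $\theta_{N,\Phi_N[a\psi+b\gamma]}^{N+n-1}$ by $\theta_{N,\Phi_N[\gamma]}^{N+n-1}$ with an ``$\mathcal{O}(a)$ error'' is not justified by Theorems~\ref{thm:weak1} and~\ref{thm:weak2}: those give \emph{qualitative} weak convergence as $a\to 0$, i.e.\ an $o(1)$ discrepancy when integrated against a fixed bounded quasi-continuous function, not a quantitative $\mathcal{O}(a)$ rate. The fact that the potentials differ by $\mathcal{O}(a)$ in $L^\infty_{\loc}$ does not transfer to an $\mathcal{O}(a)$ bound on $\int f\,(\theta_{N,\Phi_N[\gamma']}^{N+n-1}-\theta_{N,\Phi_N[\gamma]}^{N+n-1})$ for non-smooth $f$ without an integration-by-parts argument that is unavailable here (your $f=\chi v\,L\circ\hat{\alpha}[\gamma]$ is only quasi-continuous). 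With only $o(1)$ you get an $o(a)$ total error, which does not give the claimed $\mathcal{O}(a^2)$. The paper avoids this entirely: it never changes the measure. It first shows (Step~2 of its proof) that $\theta_\varphi\wedge\theta_{N,\Phi_N[\gamma']}^{N+n-1}$ is supported on $V_N=\hat{\alpha}[\gamma']^{-1}\mathring{\Sigma}_N$, swaps $L\circ\hat{\alpha}[\gamma]$ for $L\circ\hat{\alpha}[\gamma']$ using the pointwise bound $|L\circ\hat{\alpha}[\gamma']-L\circ\hat{\alpha}[\gamma]|\leq Ca|\psi-\gamma|$, and only replaces $\theta_{\gamma'}$ by $\theta_\gamma$ at the very last step, where the dependence is visibly polynomial and the error is manifestly $\mathcal{O}(a^2)$ thanks to $\gamma'-\gamma=a(\psi-\gamma)$.

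Second, the identity you reduce to is exactly the heart of the lemma, and deriving it by ``differentiating Witt Nystr\"om's formula'' along $\gamma+\epsilon h$ is not a proof: you would need to justify differentiating the non-pluripolar Monge--Amp\`ere operator along that family, and even granting that, the resulting identity carries $\ddc\bigl(h\,L\circ\hat{\alpha}[\gamma]\bigr)$ wedged with $\theta_{N,\Phi_N[\gamma]}^{N+n-1}$, which is not the quantity you need ($h\,L\circ\hat{\alpha}[\gamma]$ integrated against $\theta_\varphi\wedge\theta_{N,\Phi_N[\gamma]}^{N+n-1}$, with one factor of the measure replaced by an unrelated $\theta_\varphi$). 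Bridging that gap requires an integration by parts for the non-smooth, non-psh function $h\,L\circ\hat{\alpha}[\gamma]$ plus an approximation of $\chi v$ and $\varphi$, none of which is available off the shelf and some of which is dangerously close to the statement being proved. The paper instead computes the integral directly: on $V_N$ one has $-\tfrac12 L\circ\hat{\alpha}[\gamma']=|\hat{\alpha}[\gamma']|$ and an explicit expression for $\theta_{N,\Phi_N[\gamma']}$, and pushing the fiber integral forward by the log map and then by the gradient of $\Phi_N[\gamma']$ onto $\Sigma_N$ produces the factor $\binom{N+n-1}{n-1}N\int_0^1 t^N(\cdots)\,\mathrm{d}t$ by a concrete change of variables. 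You also omit the reduction to smooth/bounded potentials (Step~1 of the paper), which is what legitimizes the $C^{1,1}$ regularity of $\Phi_N[\gamma']$ and the support computation; without it the explicit manipulations are not available.
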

Note that $\gamma,\psi_1,\psi_2$ appear in the definition of $A^N[a,b]$. Also note that the coefficient of $a$ in \eqref{eq:iwn} is independent of the choice of $\psi$.

\begin{proof}
Since the problem is local, we may shrink $W$ when necessary. 
Let
\[
\gamma'=\gamma'[a,b]=a\psi+b\gamma.
\]
Then
\[
\gamma'\geq \gamma
\]
and
\begin{equation}\label{eq:inproofa}
\gamma'-\gamma=a(\psi-\gamma).
\end{equation}

\textbf{Step 1}. We claim that we may assume that $\psi_1,\psi_2,\gamma,\psi$ are smooth. 

To be more precise, take an open subset $W'\Subset W$ containing $\Supp \chi$.

Take sequences of smooth $\theta$-psh functions on $W$, say $\psi_j^k$ ($k\geq 1,j=1,2$) that decreases to $\psi_j$ as $k\to\infty$, we may assume that
\[
|\psi_1^k-\psi_2^k|
\]
are uniformly bounded  on $W'$ as well.

Let 
\[
A^N_k[a,b]:=\Phi_N[a\psi_1^k+b\gamma]-\Phi_N[a\psi_2^k+b\gamma]
\]
Note that $\Phi_N[a\psi_j^k+b\gamma]$ decreases to $\Phi_N[a\psi_j+b\gamma]$ outside a pluripolar set. By Proposition~\ref{prop:anstudy},
\[
\left|A^N_k[a,b]\right|\leq Ca.
\]
By dominated convergence theorem, we have
\[
I_{W,N}^k[a,b]:=\int_{W\times \mathbb{C}^N}\chi A^N_k[a,b]\,\pi^{N*}_{1}\theta_{\varphi} \wedge \theta_{N,\Phi_N[\gamma']}^{N+n-1}
\]
converges to $I_{W,N}[a,b]$. Similar reasoning applies to the coefficient of $a$ in \eqref{eq:iwn}. The $\mathcal{O}$-constant in \eqref{eq:iwn} can be taken to be independent of $k$ as we will see in Step 3, so we conclude that we may assume that both $\psi_1,\psi_2$ are smooth.

Similar reasoning applies to $\gamma$ and $\psi$. As in \cite{WN17} Page~7, we may assume that $\gamma,\psi$ are bounded on $W$.
The convergences along Demailly approximations now follow from Theorem~\ref{thm:weak1} and the argument in \cite{WN17} Page~7. (Note that we do not have to assume that $\varphi$ has small unbounded locus here!) 

Now $\Phi_N[\gamma']$ is $C^{1,1}$ on $W\times \mathbb{C}^N$
(See \cite{WN17} Page~5). 

\textbf{Step 2}.
We claim that the measure
\[
\ddc\Phi \wedge \theta_{N,\Phi_N[\gamma']}^{N+n-1}
\]
is supported on $V_N$ for any local $\theta_N$-psh function $\Phi$ on $W\times \mathbb{C}^N$.

Here
\[
V_N:=\hat{\alpha}[\gamma']^{-1}\mathring{\Sigma}_N\subseteq W\times \mathbb{C}^N.
\]
Note that $V_N$ depends on $a,b$.

Since the problem is local on $W\times \mathbb{C}^N$, we may take $\theta_N=0$ by adding to $\Phi_N[\gamma']$ and $\Phi$ a smooth function.
We may focus on an open subset $A\subseteq W\times \mathbb{C}^N$ on which $\Phi_N[\gamma']$ is bounded.

For $k\geq 0$ large enough, let $O_k=\{\Phi>-k\}$. Then by definition of the pluripolar product, it suffices to prove that
\[
\mathds{1}_{O_k} \ddc(\Phi|_{O_k}) \wedge (\ddc\Phi_N[\gamma']|_{O_k})^{N+n-1}
\]
supports on $V_N$. Replacing $\Phi$ by $\max\{\Phi,-k\}$, we may assume that $\Phi$ is bounded as well. By continuity of the Bedford--Taylor product, we may then assume that $\Phi$ is smooth.

In this case, it is well-known that
\[
(N+n)\ddc \Phi\wedge (\ddc\Phi_N[\gamma'])^{N+n-1}=\left(\Delta_{\ddc\Phi_N[\gamma']}\Phi\right) (\ddc\Phi_N[\gamma'])^{N+n}.
\]
As shown in \cite{WN17} Page~5, $(\ddc\Phi_N[\gamma'])^{N+n}$ is supported on $V_N$. This proves our claim.

\textbf{Step 3}.
By Step 2,
\[
I_{W,N}[a,b]=\int_{V_N \cap(W\times \mathbb{C}^N)}\chi A^N[a,b]\,\theta_{\varphi} \wedge \theta_{N,\Phi_N[\gamma']}^{N+n-1}.
\]
We have omitted $\pi_1^{N*}$ from our notation.

We calculate its value now. Note that
\[
\hat{\alpha}[\gamma']=\hat{\alpha}[\gamma]+\frac{a}{2}(\psi-\gamma)e,
\]
where $e=(1,\ldots,1)\in \mathbb{R}^N$. By Appendix~\ref{sec:quad}, the piecewise linear function $L$ has the same coefficients at $\hat{\alpha}[\gamma]$ and $\hat{\alpha}[\gamma']$.
So
\[
\left| L\circ \hat{\alpha}[\gamma']-L\circ \hat{\alpha}[\gamma]\right| \leq Ca |\psi-\gamma|,
\]
where $C$ depends only on $N$.

It follows that
\begin{equation}\label{eq:cl}
\int_{V_N\cap (W\times \mathbb{C}^N)} \chi v \left| L\circ \hat{\alpha}[\gamma']-L\circ \hat{\alpha}[\gamma]\right|\,\theta_{\varphi}\wedge \theta_{N,\Phi_N[\gamma']}^{N+n-1}\leq Ca
\end{equation}
for a constant $C$ independent of $a$.

So by Proposition~\ref{prop:anstudy},
\begin{equation}\label{eq:nec1}
\begin{split}
I_{W,N}[a,b]=&-\frac{a}{2}\int_{V_N \cap(W\times \mathbb{C}^N)}\chi v L\circ \hat{\alpha}[\gamma]\,\theta_{\varphi}\wedge \theta_{N,\Phi_N[\gamma']}^{N+n-1}+\mathcal{O}(a^2)\\
 =&-\frac{a}{2}\int_{V_N \cap(W\times \mathbb{C}^N)}\chi v L\circ \hat{\alpha}[\gamma']\,\theta_{\varphi} \wedge \theta_{N,\Phi_N[\gamma']}^{N+n-1}+\mathcal{O}(a^2).
\end{split}
\end{equation}
But it is easy to see that on $V_N$, 
\[
-\frac{1}{2}L\circ \hat{\alpha}[\gamma']=\left|\hat{\alpha}[\gamma']\right|.
\]
Hence
\[
I_{W,N}[a,b]=a\int_{V_N \cap(W\times \mathbb{C}^N)} \chi v \left|\hat{\alpha}[\gamma']\right|\,\theta_{\varphi} \wedge \theta_{N,\Phi_N[\gamma']}^{N+n-1}+\mathcal{O}(a^2).
\]

Note that on $V_N$,
\[
\Phi_{N}[\gamma']=C_0\log|Z_0|^2_{\omega_N}+\eta+\hat{\alpha}^2,
\]
where
\[
\hat{\alpha}:=\hat{\alpha}[\gamma'].
\]

As in \cite{WN17} Page~6, on $V_N\cap(W\times \mathbb{C}^N)$,
\[
\theta_{N,\Phi_N[\gamma']}=(1-|\hat{\alpha}|)\theta_{\eta}+|\hat{\alpha}|\theta_{\varphi}+\omega_N+\sum_{a=1}^N\mathrm{d}\hat{\alpha}_a\wedge \mathrm{d}^{\mathrm{c}}\hat{\alpha}_a,
\]
For $x\in W$, define 
\[
V_x:=V_N \cap (\{x\}\times \mathbb{C}^N).
\]
Then as in \cite{WN17} Page~6\footnote{Note that there should be an extra $\binom{N+n}{n}$ on \cite{WN17} Page~6, line~5.},
\[
\begin{split}
I_{W,N}[a,b]=a\binom{N+n-1}{n-1}\int_W \chi(x) v(x)\int_{V_x}\omega_{N,\Phi_N[\gamma']|_{V_x}}^{N}(z) |\hat{\alpha}|(x,z)\\
\left(\theta_{\varphi} \wedge \left((1-|\hat{\alpha}|)\theta_{\eta}+|\hat{\alpha}|\theta_{\gamma'}\right)^{n-1}\right)(x)+\mathcal{O}(a^2).
\end{split}
\]

We can push-forward the integral to $\{x\}\times \mathbb{R}^N$ by the log map and pushing forward further to $\{x\}\times \Sigma_N$ by the gradient of $\Phi_N[\gamma'](x,k)$ as a function of $k\in \mathbb{R}^N$ as in \cite{WN17}, we get
\[
\begin{split}
I_{W,N}[a,b]=&a\binom{N+n-1}{n-1}N!\int_W \chi v\int_{\Sigma_N} |\hat{\alpha}| \mathrm{d}\hat{\alpha}\, \theta_{\varphi} \wedge \left((1-|\hat{\alpha}|)\theta_{\eta}+|\hat{\alpha}|\theta_{\gamma'}\right)^{n-1}+\mathcal{O}(a^2)\\
=&a\binom{N+n-1}{n-1}N\int_0^1 t^N \int_W \chi v\,\theta_{\varphi} \wedge \left((1-t)\theta_{\eta}+t\theta_{\gamma'}\right)^{n-1} \,\mathrm{d}t+\mathcal{O}(a^2)\\
=&a\binom{N+n-1}{n-1}N\int_0^1 t^N \int_W \chi v\,\theta_{\varphi} \wedge \left((1-t)\theta_{\eta}+t\theta_{\gamma}\right)^{n-1} \,\mathrm{d}t+\mathcal{O}(a^2),
\end{split}
\]
where the last line follows from \eqref{eq:inproofa}.
\end{proof}

\begin{lemma}\label{lma:localesti}
Let $W\subseteq X-Z$ be an open set. 
Let $\gamma,\psi,\varphi_j, \psi_j$ ($j=1,2$) be $\theta$-psh functions on $W$. 
Let
\[
u:=\varphi_1-\varphi_2,\quad v:=\psi_1-\psi_2.
\]
Assume that
\[
v\in L^{\infty}_{\loc}(W).
\]
Take $\chi\in C^{\infty}_0(W)$, $\chi\geq 0$.
Define
\[
I_W:=\lim_{N\to\infty}\frac{(n-1)!}{N^{n-1}}\left[\int_{W\times \mathbb{C}^N}\chi A^N[a,b]\,\ddc u \wedge \theta_{N,\Phi_N[\gamma]}^{N+n-1}\right]_1.
\]
Then $[\cdot]_1$ here is equal to the usual right derivative in $a$ and the limit exists and
\[
I_W=\int_W \chi v \,\ddc u \wedge \theta_{\gamma}^{n-1}.
\]
\end{lemma}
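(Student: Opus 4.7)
The plan is to reduce directly to Lemma~\ref{lma:localesti1} by specializing its auxiliary potential $\psi$ to $\gamma$. With this choice $a\psi+b\gamma=\gamma$ for every $(a,b)$ with $a+b=1$, so $\Phi_N[a\psi+b\gamma]=\Phi_N[\gamma]$ and the $(N+n-1)$-fold wedge power appearing in Lemma~\ref{lma:localesti1} matches the one in the present statement. The hypothesis $0\leq\psi-\gamma\in L^{\infty}_{\loc}(W)$ is trivially satisfied, and $v\in L^{\infty}_{\loc}(W)$ is assumed here, so the application is legitimate. Note that the definition of $A^N[a,b]$ only involves $\psi_1,\psi_2,\gamma$, which are unaffected by our choice of the auxiliary $\psi$.

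Writing $\ddc u=\theta_{\varphi_1}-\theta_{\varphi_2}$, I apply Lemma~\ref{lma:localesti1} with $\psi=\gamma$ and with $\varphi=\varphi_j$ ($j=1,2$) and subtract the two resulting identities. This yields
\[
\int_{W\times\mathbb{C}^N}\chi A^N[a,b]\,\ddc u\wedge\theta_{N,\Phi_N[\gamma]}^{N+n-1}=a\,c_N+\mathcal{O}(a^2),
\]
where
\[
c_N:=\binom{N+n-1}{n-1}N\int_0^1 t^N\int_W \chi v\,\ddc u\wedge\bigl((1-t)\theta_\eta+t\theta_\gamma\bigr)^{n-1}\mathrm{d}t.
\]
Because the remainder is quadratic in $a$ (with constant depending on $N$ but not on $a$), at fixed $N$ the right upper derivative $[\cdot]_1$ coincides with the usual right derivative at $a=0$, and both equal $c_N$. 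This already establishes that $[\cdot]_1$ is the classical right derivative.

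It remains to compute $\lim_{N\to\infty}\frac{(n-1)!}{N^{n-1}}c_N$. Expanding $\bigl((1-t)\theta_\eta+t\theta_\gamma\bigr)^{n-1}$ by the binomial theorem and evaluating
\[
\int_0^1 t^{N+k}(1-t)^{n-1-k}\,\mathrm{d}t=\frac{(N+k)!\,(n-1-k)!}{(N+n)!},
\]
the term carrying $\theta_\eta^{n-1-k}\wedge\theta_\gamma^k$ acquires a prefactor whose asymptotic order, after multiplication by $(n-1)!/N^{n-1}$, is $N^{k-n+1}$. Hence every term with $k<n-1$ vanishes as $N\to\infty$, while the $k=n-1$ term (involving $\theta_\gamma^{n-1}$ alone) has prefactor tending to $1$, producing exactly $\int_W\chi v\,\ddc u\wedge\theta_\gamma^{n-1}$ in the limit.

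The main point requiring care is that the $\mathcal{O}(a^2)$ error in Lemma~\ref{lma:localesti1} has an implicit constant that depends on $N$, so one cannot interchange the $a$-derivative with the $N\to\infty$ limit naively. This is however harmless here because the differentiation in $a$ is carried out at each fixed $N$ first; the remainder is then killed and only the explicit rational-in-$N$ expression $c_N$ enters the limiting computation, which is handled by elementary beta-function asymptotics.
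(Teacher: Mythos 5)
Your proposal is correct and follows essentially the same route as the paper: apply Lemma~\ref{lma:localesti1} (with $\psi=\gamma$ so that $\theta_{N,\Phi_N[a\psi+b\gamma]}=\theta_{N,\Phi_N[\gamma]}$, and once for each of $\varphi_1,\varphi_2$ to handle $\ddc u$), read off the linear coefficient $c_N$ at fixed $N$, and then compute $\lim_N \frac{(n-1)!}{N^{n-1}}c_N$ by binomial expansion and beta-function asymptotics, with only the $\theta_\gamma^{n-1}$ term surviving. Your explicit remarks on why $[\cdot]_1$ is a genuine right derivative and on the $N$-dependence of the $\mathcal{O}(a^2)$ constant are sound and merely spell out what the paper leaves implicit.
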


\begin{proof}
By Lemma~\ref{lma:localesti1},
\[
\begin{split}
I_W=&\sum_{j=0}^{n-1}\binom{n-1}{j}j!\lim_{N\to\infty} N \cdot \frac{(N+n-j-1)!}{(N+n)!}  \int_W \chi v\, \ddc u\wedge  \theta_{\eta}^j \wedge \theta_{\gamma}^{n-1-j}\\
=& \int_W \chi v\,\ddc u \wedge \theta_{\gamma}^{n-1}.
\end{split}
\]
Also by Lemma~\ref{lma:localesti1}, $[\cdot]_1$ here is equal to the usual right derivative in $a$.
\end{proof}

\begin{proof}[Proof of Theorem~\ref{thm:intbypart1}]
\textbf{Step 1}. We prove the theorem under the additional assumption that $[\gamma]=[\psi_2]$.

We may assume that 
\[
\gamma\leq \psi_2\leq\psi_1.
\]

By Lemma~\ref{lma:intrestri} and Lemma~\ref{lma:localesti}, it suffices to prove the theorem under additional assumptions that $\varphi_1$ has small unbounded locus, then this is exactly Theorem~\ref{thm:intbyparts3}.

\textbf{Step 2}. Now let us consider the general case. For any $a,b\in [0,1]$, $a+b=1$, we have by Step 1:
\[
\int_X u\,\ddc\left((a\psi_1+b\gamma)-(a\psi_2+b\gamma)\right)\wedge \theta_{a\psi_2+b\gamma}^{n-1}=\int_X \left((a\psi_1+b\gamma)-(a\psi_2+b\gamma)\right)\,\ddc u\wedge \theta_{a\psi_2+b\gamma}^{n-1}.
\]
Hence for $a>0$,
\[
\int_X u\,\ddc v\wedge \theta_{a\psi_2+b\gamma}^{n-1}=\int_X v\,\ddc u\wedge \theta_{a\psi_2+b\gamma}^{n-1}.
\]
Since both sides are polynomials in $a$, equality for all $a>0$ implies immediately equality at $a=0$. That is, 
\[
\int_X u\,\ddc v\wedge \theta_{\gamma}^{n-1}=\int_X v\,\ddc u\wedge \theta_{\gamma}^{n-1}.
\]
\end{proof}

\begin{proof}[Proof of Theorem~\ref{thm:intbyparts3}]
By polarization, we may assume that $\gamma_1=\cdots=\gamma_{n-1}=\gamma$.

We can repeat the same argument as in Lemma~\ref{lma:intrestri} and Theorem~\ref{thm:intbypart1}, the only differences being that 

1. now the integration by parts in \eqref{eq:intbypartstemp} follows from \cite{BEGZ10} Theorem~1.14. So we get  \eqref{eq:intres} as before. 

2. \eqref{eq:a2} is replaced by
\[
\int_{X_N} A^N[a,b]\,\pi^{N*}_{1}\theta_{\varphi_1}\wedge \theta_{N,\Phi_N[a\psi_2+b\gamma]}^{N+n-1}-\int_{X_N} A^N[a,b]\,\pi^{N*}_{1}\theta_{\varphi_1} \wedge \theta_{N,\Phi_N[\gamma]}^{N+n-1}=o(a).
\]
By Proposition~\ref{prop:anstudy}, it suffices to prove
\begin{equation}\label{eq:a1}
\int_{X_N} vL\circ\hat{\alpha}[\gamma]\,\theta_{\varphi_1}\wedge \theta_{N,\Phi_N[a\psi_2+b\gamma]}^{N+n-1}-\int_{X_N} vL\circ\hat{\alpha}[\gamma]\,\theta_{\varphi_1} \wedge \theta_{N,\Phi_N[\gamma]}^{N+n-1}=o(1),\quad a\to 0+.
\end{equation}
Note that $vL\circ\hat{\alpha}[\gamma]$ is quasi-continuous outside a closed pluripolar set: $v$ and $\hat{\alpha}[\gamma]$ are quasi-continuous (outside a closed pluripolar set). Since $L$ is continuous, $L\circ \hat{\alpha}[\gamma]$ is quasi-continuous as well. 
Now \eqref{eq:a1} follows from Theorem~\ref{thm:weak2} and Corollary~\ref{cor:capconv}.
\end{proof}

\begin{corollary}\label{cor:intrestri}
Let $\gamma, \varphi_j,\psi_j\in \PSH(X,\theta)$ ($j=1,2$). Let $u=\varphi_1-\varphi_2$, $v=\psi_1-\psi_2$. Assume that
\[
[\varphi_1]=[\varphi_2],\quad [\psi_1]=[\psi_2]=[\gamma].
\]
Moreover, assume that 
\[
\gamma\leq \psi_2\leq \psi_1.
\]

Then
\begin{equation}\label{eq:intres1}
\int_X u\,\ddc v\wedge \theta_{\gamma}^{n-1}=\lim_{N\to\infty} 
\frac{(n-1)!}{N^{n-1}}\left[\int_{X_N}u\,\ddc A^N[a,b] \wedge \theta_{N,\Phi_N[\gamma]}^{N+n-1} \right]_1.
\end{equation}
\end{corollary}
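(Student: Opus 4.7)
The plan is to deduce the corollary directly from Lemma~\ref{lma:intrestri} by an integration by parts on $X_N$. The right-hand side of \eqref{eq:intres1} differs from the right-hand side of Lemma~\ref{lma:intrestri} only in which factor, $u$ or $A^N[a,b]$, carries the $\ddc$, so swapping these via the unrestricted integration by parts formula that we have just proven will suffice.

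Concretely, I fix $N\geq 1$ and, for each $a\in[0,1]$ with $b=1-a$, apply Theorem~\ref{thm:intbypart1} on the compact K\"ahler manifold $X_N = X\times\mathbb{P}^N$ with the big class $[\theta_N]$, to the two differences $\pi_1^{N*}\varphi_1-\pi_1^{N*}\varphi_2$ and $A^N[a,b]=\Phi_N[a\psi_1+b\gamma]-\Phi_N[a\psi_2+b\gamma]$, taking the remaining $N+n-1$ cohomological slots all equal to $\Phi_N[\gamma]$. The hypothesis $[\varphi_1]=[\varphi_2]$ is preserved by $\pi_1^{N*}$, while $[\psi_1]=[\psi_2]=[\gamma]$ together with the evident fact that $\Phi_N$ shifts by at most an additive constant under a bounded additive perturbation of its input (immediate from \eqref{eq:PhiN}, since $|\alpha|\leq 1$) ensures $[\Phi_N[a\psi_1+b\gamma]]=[\Phi_N[a\psi_2+b\gamma]]$. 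Theorem~\ref{thm:intbypart1} therefore yields
\[
\int_{X_N}u\,\ddc A^N[a,b]\wedge\theta_{N,\Phi_N[\gamma]}^{N+n-1}=\int_{X_N}A^N[a,b]\,\ddc u\wedge\theta_{N,\Phi_N[\gamma]}^{N+n-1},
\]
where I have omitted $\pi_1^{N*}$ from the notation.

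This is an equality of real-valued functions of $a\in[0,1]$, so applying the upper Dini derivative $[\,\cdot\,]_1$ at $a=0$ and multiplying by $(n-1)!/N^{n-1}$ preserves it; letting $N\to\infty$, the right-hand side converges to $\int_X u\,\ddc v\wedge\theta_\gamma^{n-1}$ by Lemma~\ref{lma:intrestri}, so the left-hand side has the same limit, which is \eqref{eq:intres1}. The one non-trivial point in this argument is the invocation of Theorem~\ref{thm:intbypart1} on $X_N$: unlike in the proof of Theorem~\ref{thm:intbyparts3}, neither $\pi_1^{N*}\varphi_j$ nor $\Phi_N[a\psi_j+b\gamma]$ is assumed to have small unbounded locus (because $\varphi_j$ and $\gamma$ themselves need not), so I genuinely rely on the fully general integration by parts formula established in this paper, not merely the restricted small-unbounded-loci version of \cite{BEGZ10}. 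Everything else amounts to bookkeeping against the definitions.
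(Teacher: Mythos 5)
Your proof is correct and is essentially the paper's intended argument: the paper's own (one-line) proof cites exactly Lemma~\ref{lma:intrestri} together with the general integration by parts formula, which you apply on $X_N$ to move the $\ddc$ from $A^N[a,b]$ to $u$ before taking $[\cdot]_1$ and $N\to\infty$. Your verification that $[\Phi_N[a\psi_1+b\gamma]]=[\Phi_N[a\psi_2+b\gamma]]$ and that $\pi_1^{N*}\varphi_j$ is $\theta_N$-psh supplies precisely the bookkeeping the paper leaves implicit.
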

\begin{proof}
This follows from Lemma~\ref{lma:intrestri}, Theorem~\ref{thm:intbyparts3}, Theorem~\ref{thm:intbypart1}.
\end{proof}

Finally, let us observe that by a polarization procedure, one gets the following slightly more general result.
\begin{corollary}\label{cor:ibpfinal} Let $\alpha_j$ $(j=0,\ldots,n)$ be big cohomology classes on $X$. Let $\theta_j$ $(j=0,\ldots,n)$ be smooth representatives in $\alpha_j$. Let $\gamma_j\in \PSH(X,\theta_j)$ ($j=2,\ldots,n$). Let $\varphi_1,\varphi_2\in \PSH(X,\theta_0)$, $\psi_1,\psi_2\in \PSH(X,\theta_1)$. Let $u=\varphi_1-\varphi_2$, $v=\psi_1-\psi_2$. 
Assume that
\[
[\varphi_1]=[\varphi_2],\quad [\psi_1]=[\psi_2].
\]
Then
\begin{equation}\label{eq:ibp4}
\int_X u\,\ddc v \wedge \theta_{2,\gamma_2}\wedge \cdots\wedge \cdots \wedge   \theta_{n,\gamma_{n}}= \int_X v\,\ddc u \wedge \theta_{2,\gamma_2}\wedge \cdots\wedge \cdots \wedge\theta_{n,\gamma_{n}}.
\end{equation}
\end{corollary}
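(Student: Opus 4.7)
The plan is to deduce Corollary~\ref{cor:ibpfinal} from Theorem~\ref{thm:intbypart1} by a simultaneous polarization in all $n+1$ cohomology classes. Introduce positive parameters $t_0,t_1,\ldots,t_n$ and form the smooth representative
\[
\tilde{\theta}(t):=\sum_{j=0}^n t_j\theta_j
\]
of the big class $\sum_{j=0}^n t_j\alpha_j$ (a positive linear combination of big classes is big). For $k=1,2$, set
\[
\tilde{\varphi}_k:=t_0\varphi_k+t_1\psi_2+\sum_{j=2}^n t_j\gamma_j,\quad \tilde{\psi}_k:=t_0\varphi_2+t_1\psi_k+\sum_{j=2}^n t_j\gamma_j,
\]
and
\[
\tilde{\gamma}:=t_0\varphi_2+t_1\psi_2+\sum_{j=2}^n t_j\gamma_j.
\]
A direct computation yields $\tilde{\theta}(t)_{\tilde{\varphi}_k}=t_0\theta_{0,\varphi_k}+t_1\theta_{1,\psi_2}+\sum_{j=2}^n t_j\theta_{j,\gamma_j}\geq 0$, with analogous formulas for $\tilde{\psi}_k$ and $\tilde{\gamma}$, so $\tilde{\varphi}_k,\tilde{\psi}_k,\tilde{\gamma}\in\PSH(X,\tilde{\theta}(t))$. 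Moreover $\tilde{\varphi}_1-\tilde{\varphi}_2=t_0 u$ and $\tilde{\psi}_1-\tilde{\psi}_2=t_1 v$, while the singularity-type conditions $[\tilde{\varphi}_1]=[\tilde{\varphi}_2]$ and $[\tilde{\psi}_1]=[\tilde{\psi}_2]$ are inherited from the hypotheses.

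Next I apply Theorem~\ref{thm:intbypart1} inside the class $[\tilde{\theta}(t)]$ with all $n-1$ auxiliary potentials equal to $\tilde{\gamma}$, and divide the resulting identity by $t_0 t_1$. Using the splittings
\[
\tilde{\theta}(t)_{\tilde{\varphi}_1}-\tilde{\theta}(t)_{\tilde{\varphi}_2}=t_0\left(\theta_{0,\varphi_1}-\theta_{0,\varphi_2}\right),\quad \tilde{\theta}(t)_{\tilde{\psi}_1}-\tilde{\theta}(t)_{\tilde{\psi}_2}=t_1\left(\theta_{1,\psi_1}-\theta_{1,\psi_2}\right),
\]
this gives
\[
\int_X u\left(\theta_{1,\psi_1}-\theta_{1,\psi_2}\right)\wedge \tilde{\theta}(t)_{\tilde{\gamma}}^{n-1}=\int_X v\left(\theta_{0,\varphi_1}-\theta_{0,\varphi_2}\right)\wedge \tilde{\theta}(t)_{\tilde{\gamma}}^{n-1}.
\]
Both sides are polynomials in $(t_0,\ldots,t_n)$ of total degree $n-1$, because $\tilde{\theta}(t)_{\tilde{\gamma}}=t_0\theta_{0,\varphi_2}+t_1\theta_{1,\psi_2}+\sum_{j=2}^n t_j\theta_{j,\gamma_j}$ and the non-pluripolar product distributes multilinearly over such positive combinations of closed positive currents of the required type (\cite{BEGZ10} Proposition~1.4).

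Finally, I expand $\tilde{\theta}(t)_{\tilde{\gamma}}^{n-1}$ by the multinomial theorem and equate the coefficients of the pure monomial $t_2 t_3 \cdots t_n$ on the two sides: each side contributes exactly $(n-1)!$ times one of the integrals in \eqref{eq:ibp4}, and cancelling $(n-1)!$ yields the corollary. The only step with any substance is justifying the multilinear expansion of the non-pluripolar product used in the coefficient matching, which reduces to the cited standard fact; beyond this the argument is a purely formal polarization and carries no additional analytic difficulty.
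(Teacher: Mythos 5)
Your proposal is correct and follows essentially the same route as the paper: polarize by forming $\tilde{\theta}(t)=\sum_j t_j\theta_j$, apply Theorem~\ref{thm:intbypart1} to suitable combined potentials so that the difference potentials become $t_0u$ and $t_1v$, divide by $t_0t_1$, and match the coefficient of $t_2\cdots t_n$ using multilinearity of the non-pluripolar product. The only (immaterial) difference is that you base the auxiliary potential on $\varphi_2,\psi_2$ where the paper uses $\varphi_1,\psi_1$.
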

\begin{proof}
Take $\alpha=(a_0,\ldots,a_n)\in \mathbb{R}_{\geq 0}^{n+1}$. Let 
\[
\theta_{\alpha}=\sum_{j=0}^n a_j\theta_j.
\]
Let 
\[
\gamma_{\alpha}=\sum_{j=2}^n a_j\gamma_j,\quad \tilde{\gamma}_{\alpha}=a_0\varphi_1+a_1\psi_1+\gamma_{\alpha}.
\]
Note that
\[
\begin{split}
a_0u &= \left(a_0\varphi_1+a_1\psi_1+\gamma_{\alpha}\right)-\left(a_0\varphi_2+a_1\psi_1+\gamma_{\alpha}\right),\\
a_1v &= \left(a_0\varphi_1+a_1\psi_1+\gamma_{\alpha}\right)-\left(a_0\varphi_1+a_1\psi_2+\gamma_{\alpha}\right).
\end{split}
\]
So by Theorem~\ref{thm:intbypart1},
\[
a_0a_1 \int_X u\,\ddc v\wedge \theta_{\alpha,\tilde{\gamma}_{\alpha}}^{n-1}=a_0a_1 \int_X v\,\ddc u\wedge \theta_{\alpha,\tilde{\gamma}_{\alpha}}^{n-1}.
\]
So for $a_0>0,a_1>0$,
\[
\int_X u\,\ddc v\wedge \theta_{\alpha,\tilde{\gamma}_{\alpha}}^{n-1}= \int_X v\,\ddc u\wedge \theta_{\alpha,\tilde{\gamma}_{\alpha}}^{n-1}.
\]
Since both sides are polynomials in $a_0,\ldots,a_n$, this means that all coefficients are equal. In particular, the coefficients of
\[
a_2\ldots a_{n}
\]
are equal, hence proving \eqref{eq:ibp4}.
\end{proof}

\appendix
\section{Quadratic optimization}\label{sec:quad}
Let $N\geq 1$. We study the following function $f=f_N:\mathbb{R}^N\rightarrow \mathbb{R}$:
\[
f(x):=\min_{\alpha\in \Sigma_N} (x-\alpha)^2.
\]
Let $\Pi:\mathbb{R}^N\rightarrow \Sigma_N$ be the closest point projection. It is well-defined since $\Sigma_N$ is convex and closed.
Let $e=(1,1,\ldots,1)\in \mathbb{R}^N$.

Let $\mathcal{F}$ be the set of faces of $\Sigma_N$ as a simplex. By a face, we mean the interior of the face. The extremal points of $\Sigma_N$ are also considered as faces in $\mathcal{F}$.
So
\[
\Sigma_N=\coprod_{F\in \mathcal{F}}F.
\]
Observe that if $\Pi(x)\in F\in \mathcal{F}$, then so is $\Pi(x+\epsilon e)$ for small enough $\epsilon>0$. Let $A_F=\Pi^{-1}F$, then
\[
\mathbb{R}^N=\coprod_{F\in \mathcal{F}}A_F.
\]
Now observe that on each $A_F$, $\Pi$ is affine, say
\[
\Pi(x)=M^F x+c^F,
\]
where $M^F\in \mathfrak{gl}(N,\mathbb{R})$, $c^F\in \mathbb{R}^N$.

Define $g=g_N:\mathbb{R}^N\rightarrow \mathbb{R}$:
\[
g(x)=f(x)-x^2.
\]
Then we have
\[
g(x)= \left(x-\Pi x \right)^2-x^2.
\]

\begin{proposition}\label{prop:second}
For $x\in \mathbb{R}^N$,
\[
g(x+te)-g(x)=t L(x)+\mathcal{O}(t^2),\quad t\to 0+,
\]
where the $\mathcal{O}$-constant depends only on $N$, $L(x)$ is a bounded continuous piecewise linear function whose coefficients depend only on $N$.
\end{proposition}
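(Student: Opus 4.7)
The plan is to recognize $f(x) = (x-\Pi x)^2$ as the squared Euclidean distance from $x$ to the convex set $\Sigma_N$, and to exploit the classical fact that this function is of class $C^{1,1}$ with
\[
\nabla f(x) = 2(x - \Pi x), \qquad \|\nabla f(x) - \nabla f(y)\| \leq 2\|x-y\|.
\]
Both properties follow quickly from the variational characterization of the closest point projection, namely $(x - \Pi x)\cdot(z - \Pi x) \leq 0$ for all $z \in \Sigma_N$: the gradient formula via a standard two-sided comparison, and the Lipschitz bound from the nonexpansiveness of $\Pi$. Consequently $g(x) = f(x) - x^2$ is also $C^{1,1}$, with the very clean formula
\[
\nabla g(x) = 2(x - \Pi x) - 2x = -2\,\Pi x.
\]

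Setting
\[
L(x) := e \cdot \nabla g(x) = -2\, e \cdot \Pi x,
\]
the first-order Taylor estimate for $C^{1,1}$ functions then gives immediately
\[
g(x + te) - g(x) = t\,L(x) + \mathcal{O}(t^2), \qquad t \to 0+,
\]
with $\mathcal{O}$-constant controlled by the Lipschitz constant of $\nabla g$ and by $\|e\|^2 = N$, hence depending only on $N$ as required. (Because $f$ is convex and $\nabla f$ is globally Lipschitz, the error is uniform in $x$; there is no issue at the non-smooth locus of $\Pi$.)

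The three stated properties of $L$ are then routine: boundedness is immediate from $\Pi x \in \Sigma_N$, giving $|L(x)| = 2\,|\Pi x| \leq 2$; continuity follows from continuity of $\Pi$; and piecewise linearity is precisely the content of the decomposition $\mathbb{R}^N = \coprod_F A_F$ set up just before the statement, since on each $A_F$ one has $\Pi x = M^F x + c^F$ with $M^F, c^F$ depending only on the face $F$ of $\Sigma_N$, hence only on $N$. The only nontrivial input is the $C^{1,1}$ regularity of the squared distance function to a closed convex set, which is classical; all the face-by-face affine structure set up in this appendix is used only to exhibit $L$ explicitly as a continuous piecewise linear function, not to prove the expansion itself, so no real obstacle arises.
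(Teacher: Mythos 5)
Your proof is correct, but it takes a genuinely different route from the paper's. The paper gets the existence of the one-sided expansion from the piecewise-affine structure of $\Pi$ together with the observation that $\Pi(x)\in F$ implies $\Pi(x+\epsilon e)\in F$ for small $\epsilon>0$, defines $L(x)$ implicitly as the coefficient of $t$ in the resulting quadratic, and then has to work separately for boundedness (via the comparison $g(x)-g(y)\geq\min_{\alpha\in\Sigma_N}2\alpha\cdot(y-x)$) and for continuity (via evaluating the quadratic at three values of $t$). You instead invoke the classical $C^{1,1}$ regularity of the squared distance to a closed convex set, which gives the explicit formula $\nabla g(x)=-2\Pi x$ and hence $L(x)=-2\,e\cdot\Pi x=-2|\Pi x|$; from this, boundedness ($|L|\leq 2$ since $\Pi x\in\Sigma_N$), continuity (nonexpansiveness of $\Pi$), piecewise linearity (affineness of $\Pi$ on each $A_F$), and the uniform $\mathcal{O}(t^2)$ error with constant $N$ all follow at once. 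Your approach buys a two-sided derivative, uniformity in $x$, and an explicit identification of $L$ that is consistent with the identity $-\tfrac12 L\circ\hat{\alpha}=|\hat{\alpha}|$ used later on $V_N$; the paper's approach avoids citing the regularity of the distance function but leaves more to check by hand. The only input you should make sure is genuinely available is the differentiability of $d(\cdot,\Sigma_N)^2$ with gradient $2(x-\Pi x)$, which, as you note, follows from the variational inequality $(x-\Pi x)\cdot(z-\Pi x)\leq 0$; this is standard and unproblematic.
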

\begin{proof}
All statements are obvious except that $L(x)$ is bounded and continuous. To see that $L$ is bounded, it suffices to show that $g(x+te)-g(x)$ is bounded for a fixed $t>0$. More generally, let $x,y\in \mathbb{R}^N$, then
\[
g(x)-g(y)\geq \min_{\alpha\in \Sigma_N} \left((x-\alpha)^2-x^2-(y-\alpha)^2+y^2\right)=\min_{\alpha\in \Sigma_N} 2\alpha\cdot (y-x).
\]
A similar inequality hold if we interchange $x$ and $y$. So
\[
|g(x)-g(y)|\leq C.
\]
To see $L$ is continuous, observe that
\[
g(x+te)-g(x)
\]
is a quadratic function in $t$ for any $x$. And since $L(x)$ is nothing but the coefficient of $t$, it suffices to show that $g(x+te)-g(x)$ is continuous in $x$ for three value of $t$. So the result follows from the obvious continuity of $g$.
\end{proof}
Now we extend the domain of definition of $g_N$, we will get a symmetric function $g_N:[-\infty,\infty)^N\rightarrow \mathbb{R}$. The definition is by induction on $N$, when $N=1$, we simply define
\[
g_1(-\infty)=0.
\]
For $N>1$, define
\[
g_N(x_1,\ldots,x_M,x_{M+1},\ldots,x_{N})=g_M(x_1,\ldots,x_M),
\]
where $x_{M+1},\ldots,x_N=-\infty$ and $x_1,\ldots,x_{M}\in \mathbb{R}$. We formally set $g_0=0$. We get a full definition of $g_N$ by requiring that it is symmetric in the $N$-arguments.
It is not hard to see that $g_N$ is continuous.
\begin{proposition}\label{prop:gdecrease}
The function $g_N:[-\infty,\infty)^N\rightarrow \mathbb{R}$ is decreasing in each of its arguments.
\end{proposition}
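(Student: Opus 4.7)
My approach rewrites the defining minimization as
\[
g_N(x) = \min_{\alpha \in \Sigma_N}\bigl(\alpha^2 - 2 x \cdot \alpha\bigr)
\]
on $\mathbb{R}^N$ (by expanding the square in the definition of $f_N$) and then applies a direct envelope estimate. The crucial preliminary observation is: if $x_i \leq 0$, then the one-variable map $\alpha_i \mapsto \alpha_i^2 - 2x_i \alpha_i$ on $[0,\infty)$ is minimized at $\alpha_i = 0$, so any minimizer $\alpha^* = \Pi(x)$ satisfies $\alpha^*_i = 0$. Consequently
\[
g_N(x_1, \ldots, x_N) = g_{N-1}(x_1, \ldots, \hat{x}_i, \ldots, x_N) \qquad \text{whenever } x_i \leq 0,
\]
which matches the recursive extension of $g_N$ across $x_i = -\infty$. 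The optimization formula therefore extends consistently to $[-\infty,\infty)^N$ under the convention that $x_i = -\infty$ forces $\alpha_i = 0$ in the minimization.

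By the symmetry of $g_N$ in its arguments, it suffices to prove monotonicity in the last coordinate. Fix $x_1, \ldots, x_{N-1}$ and finite $t_1 < t_2$, and let $\alpha^* := \Pi(x_1, \ldots, x_{N-1}, t_1) \in \Sigma_N$. Since $\alpha^*$ is still feasible for the minimization defining $g_N$ at $(x_1, \ldots, x_{N-1}, t_2)$, we obtain
\[
g_N(x_1, \ldots, x_{N-1}, t_2) \leq (\alpha^*)^2 - 2(x_1, \ldots, x_{N-1}, t_2) \cdot \alpha^* = g_N(x_1, \ldots, x_{N-1}, t_1) - 2(t_2 - t_1)\alpha^*_N \leq g_N(x_1, \ldots, x_{N-1}, t_1),
\]
because $\alpha^*_N \geq 0$. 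This settles the case in which both arguments are finite.

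The boundary case $t_1 = -\infty$, $t_2 \in \mathbb{R}$ is handled by the preliminary observation: for $t_2 \leq 0$ we even have equality $g_N(\ldots, t_2) = g_{N-1}(\ldots) = g_N(\ldots, -\infty)$, while for $t_2 > 0$ the finite-case estimate yields $g_N(\ldots, t_2) \leq g_N(\ldots, 0) = g_N(\ldots, -\infty)$. If some of the remaining coordinates $x_j$ are also $-\infty$, an induction on $N$ combined with the recursive definition reduces to a lower-dimensional instance of the same statement. The only genuinely subtle point is reconciling the recursive boundary definition with the optimization formula; once that identification is made, the envelope estimate is immediate and the proposition follows.
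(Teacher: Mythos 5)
Your proof is correct and follows essentially the same route as the paper: both rewrite $f_N(x)-x^2$ as $\min_{\alpha\in\Sigma_N}(\alpha^2-2x\cdot\alpha)$ and use that each function in this family is decreasing in every coordinate because $\alpha\geq 0$. The only difference is that you make explicit the boundary identification $g_N(x)=g_{N-1}(\hat{x})$ for $x_i\leq 0$, which the paper subsumes in the (correct but unelaborated) reduction ``it suffices to prove this on $\mathbb{R}^N$'' via the continuity of the extended $g_N$.
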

\begin{proof}
It suffices to prove this on $\mathbb{R}^N$. By definition, it suffices to show that for each $\alpha\in \Sigma_N$, the function
\[
(x-\alpha)^2-x^2
\]
is decreasing in each argument. This reduces immediately to the case $N=1$ and the result is obvious.
\end{proof}

\printbibliography

\bigskip
  \footnotesize

  Mingchen Xia, \textsc{Department of Mathematics, Chalmers Tekniska Högskola, G\"oteborg}\par\nopagebreak
  \textit{E-mail address}, \texttt{xiam@chalmers.se}

\end{document}